\newtheorem{theorem}{Theorem}[section]
\newtheorem{lemma}[theorem]{Lemma}
\theoremstyle{definition}
\newtheorem{definition}[theorem]{Definition}
\newtheorem{example}[theorem]{Example}
\theoremstyle{remark}
\newtheorem{remark}[theorem]{Remark}
\numberwithin{equation}{section}
\begin{document}

\title{On some curves with modified orthogonal frame in Euclidean 3-space}

\author{Mohamd Saleem Lone}
\address{International Centre for Theoretical Sciences, Tata Institute of Fundamental Research, 560089, Bengaluru, India}
\curraddr{International Centre for Theoretical Sciences, Tata Institute of Fundamental Research, 560089, Bengaluru, India}
\email{mohamdsaleem.lone@icts.res.in}

\author{Hasan Es}
\address{Gazi University, Gazi Educational Faculty, Department of
Mathematical Education, 06500 Teknikokullar / Ankara-Turkey}
\email{hasanes@gazi.edu.tr}

\author{Murat Kemal Karacan}
\address{Usak University, Faculty of Sciences and Arts, Department of
Mathematics,1 Eylul Campus, 64200,Usak-Turkey}
\email{murat.karacan@usak.edu.tr}

\author{Bahaddin Bukcu}
\address{Gazi Osman Pasa University, Faculty of Sciences and Arts,
Department of Mathematics,60250,Tokat-Turkey}
\email{bbukcu@yahoo.com}


\subjclass[2000]{53A04, 53A35}



\keywords{Bertrand curve, Helix(slant), Modified orthogonal frame.}

\begin{abstract}
In this paper, we study helices and the Bertrand curves. We obtain some of the classification results of these curves with respect to the modified orthogonal frame in Euclidean 3-spaces.
\end{abstract}

\maketitle

\section{\protect \bigskip Introduction}
In the classical study of differential geometry, curves satisfying particular relations with respect to their curvatures are of greater importance and applications. Helices and Bertrand curves are two among the prominent. A helix is a curve whose tangent makes a constant angle with a fixed direction(axis) \cite{12,h11}. There is a famous classification of a general helix(Lancret theorem): a curve is a general helix iff $\frac{\kappa}{\tau}$= constant \cite{13}. Izumiya and Takeuchi \cite{14} defined a special class of helices called as slant helices, where the normal vector observes a constant angle with the fixed direction. They obtained a necessary and sufficient condition for a curve to be slant helix: a curve is a slant helix iff its geodesic curvature and the  principal normal satisfying the expression
\begin{equation}\label{helix}
\frac{\kappa^2}{(\kappa^2 + \tau^2)^{\frac{3}{2}}}\left(\frac{\tau}{\kappa}\right)^\prime
\end{equation} is a constant function. Kula and Yayl{\i} \cite{14a} studied slant helix and its spherical indicatrix. They showed that a curve of constant precession is a slant helix. Later, Kula et al. \cite{14b} investigated the relations between a general helix and a slant helix.

J. Bertrand in 1850 discovered the notion of Bertrand curve. A Bertrand curve is a curve whose principal normal is normal to some other curve called as Bertrand mate curve. Such a pair of curves is called as Bertrand pair \cite{1,3}. Bertrand curves satisfy a linear relation$(a \tau + b \kappa=1)$ between its curvature and torsion and hence appear as an  analogous form of one dimensional linear Weingarten surfaces which also have found enormous applications  in Computer Aided Geometric Design(CAGD) \cite{11}. Therefore, we see that Bertrand curve appear as a natural generalization of helices \cite{15}, and in particular, Bertrand mates are used as offset curves in CAGD \cite{9}. Schief \cite{10} used the soliton theory to study the geodesic imbedding of Bertrand curves. For more study of helices and Bertrand curves, we refer \cite{h4,5,h3,8}.

\section{Preliminaries}

\bigskip Let $\varphi(s)$ be a $C^{3}$ space curve in Euclidean 3-space $E^3$, parametrized by arc length $s$. We also assume that{ \it its
curvature $\kappa (s)\neq 0$ anywhere}. Then an
orthonormal frame $\left \{ {\bf t,n,b}\right \} $ exists satisfying the
Frenet-Serret equations%
\begin{equation}\label{h2.1}
\left[ 
\begin{array}{c}
{\bf t}^{\prime }(s) \\ 
{\bf n}^{\prime }(s) \\ 
{\bf b }^{\prime }(s)%
\end{array}%
\right] =\left[ 
\begin{array}{ccc}
0 & \kappa & 0 \\ 
-\kappa & 0 & \tau \\ 
0 & -\tau & 0%
\end{array}%
\right] \left[ 
\begin{array}{c}
{\bf t}(s) \\ 
{\bf n}(s) \\ 
{\bf b}(s)%
\end{array}%
\right], 
\end{equation}%
where ${\bf t}$ is the unit tangent, ${\bf n}$ is the unit principal normal, ${\bf b}$ is the unit binormal, and $\tau (s)$ is the torsion. For a given  $C^{1}$ function $\kappa (s)$  and a continuous function $\tau (s)$, there exists a $C^{3}$ curve $\varphi$ which has an orthonormal frame $\left \{{\bf t,n,b}\right \} $ satisfying the Frenet-Serret frame (2.1). Moreover, any other curve $\tilde{\varphi}$ satisfying the same conditions differs from $\varphi$ only by a rigid motion.

Now let $\varphi(t)$ be a general analytic curve which can be reparametrized by its arc length $s$, where $s\in I$ and $I$ is a nonempty open interval.  Assuming that the curvature function has {\it discrete zero points} or $\kappa(s)$ {\it is not identically zero}, we have an orthogonal frame $\left \{ T,N,B\right \} $ defined as follows:%
\begin{equation*}
T=\frac{d\varphi }{ds},\quad  N=\frac{dT}{ds},\quad B=T\times N,
\end{equation*}%
where $T\times N$ is the vector product of $T$ and $N$. The relations
between $\left \{ T,N,B\right \} $ and previous Frenet frame vectors at
non-zero points of $\kappa $ are%
\begin{equation}\label{h2.2}
T={\bf t},N=\kappa {\bf n},B=\kappa {\bf  b}.  
\end{equation}%
Thus, we see that $N(s_{0})=B(s_{0})=0$ when $\kappa (s_{0})=0$ and squares of the length
of $N$ and $B$ vary analytically in $s$. From Eq. (2.2), it is easy to calculate

\begin{equation}\label{h2.3}
\left[ 
\begin{array}{c}
T^{\prime }(s) \\ 
N^{\prime }(s) \\ 
B^{\prime }(s)%
\end{array}%
\right] =\left[ 
\begin{array}{ccc}
0 & 1 & 0 \\ 
-\kappa ^{2} & \frac{\kappa ^{\prime }}{\kappa } & \tau \\ 
0 & -\tau & \frac{\kappa ^{\prime }}{\kappa }%
\end{array}%
\right] \left[ 
\begin{array}{c}
T(s) \\ 
N(s) \\ 
B(s)%
\end{array}%
\right],  
\end{equation}%
where all the differentiation is done with respect to the arc length$(s)$
and 
\begin{equation*}
\tau =\tau (s)=\frac{\det \left( \varphi ^{\prime },\varphi ^{\prime \prime
},\varphi ^{\prime \prime \prime }\right) }{\kappa ^{2}}
\end{equation*}%
is the torsion of $\varphi $. From Frenet-Serret equation, we know that any point, where $\kappa ^{2}=0$ is a removable singularity of $\tau $. Let $\left \langle ,\right \rangle $ be the standard inner product of $E^3$, then $\left \{ T,N,B\right \} $ satisfies:%
\begin{equation}\label{h2.4}
\left \langle T,T\right \rangle =1,\left \langle N,N\right \rangle =\left
\langle B,B\right \rangle =\kappa ^{2},\left \langle T,N\right \rangle
=\left \langle T,B\right \rangle =\left \langle N,B\right \rangle =0.
\end{equation}%
The orthogonal frame defined in (\ref{h2.3}) satisfying (\ref{h2.4}) is called as a modified orthogonal frame \cite{hh7}. We see that for $\kappa=1$, the Frenet-Serret frame coincides with the modified orthogonal frame.

Let $I$ be an open interval of real line $R$ and $M$ be a $n-$dimensional Riemannian manifold and $T_p(M)$ be a tangent space of $M$ at a point $p\in M$. A curve on $M$ is a smooth mapping  $\psi:I\rightarrow M$. As a submanifold of $R$, $I$ has a coordinate system consisting of the identity map $u$ of $I$. The velocity vector of $\psi $ at $s$ $\in I$ is given by
\begin{equation*}
\psi ^{\prime }(s)=\frac{d\psi (u)}{du}\left \vert _{s}\right. \in \text{%
T}_{\psi (s)}(M).
\end{equation*}
A curve $\psi(s)$ is said to be regular if $\psi^\prime(s)\neq 0$ for any $s$. Let $\psi (s)$ be a space curve on $M$ and $\{{\bf t,n,b}\}$ the moving Frenet frame along $\psi $, then we have the following properties 
\begin{equation}\label{h2.5}
\left \{ 
\begin{array}{c}
\psi ^{\prime }(s) =\mathbf{t} \\
D_{\mathbf{t}}\mathbf{t} =\kappa \mathbf{n} \\
D_{\mathbf{t}}\mathbf{n} =-\kappa \mathbf{t}+\tau \mathbf{b} \\
D_{\mathbf{t}}\mathbf{b} =-\tau \mathbf{n},
\end{array}
\right. 
\end{equation}
where $D$ denotes the covariant differentiation on $M$ \cite{h16}.

\section{General helices with modified orthogonal frame}

In this section, we study a curve on manifold $M$. From (\ref{h2.3}), we have 
\begin{equation}\label{h3.1}
\left \{ 
\begin{array}{c}
\psi ^{\prime }(s)=\mathbf{t}=T \\ 
D_TT=N \\ 
D_{T}N=-\kappa ^{2}T+\dfrac{\kappa ^{\prime }}{\kappa }N+\tau B \\ 
D_{T}B=-\tau N+\dfrac{\kappa ^{\prime }}{\kappa }B.
\end{array}%
\right.  
\end{equation}
\begin{theorem}(Lancret theorem)
Let $\psi :I\rightarrow E^{3}$ be a parametrization by arc-length. Then, with respect to the modified orthogonal frame, 
$\psi $ is a general helix if and only if $\dfrac{\tau (s)}{\kappa (s)}=\lambda,$ where $\lambda \in R.$ 
\end{theorem}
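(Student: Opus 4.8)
The plan is to characterise a general helix by the existence of a fixed vector $U\in E^{3}$ such that $\langle T,U\rangle$ is constant (equivalently, the unit tangent $T=\mathbf t$ makes a constant angle with the direction $U$), and to extract the consequences from the structure equations \eqref{h3.1}. Since $\psi:I\to E^{3}$, the covariant derivative $D_{T}$ is just ordinary differentiation; and since $\psi$ is analytic with only discrete zeros of $\kappa$, it suffices to work on the open dense set where $\kappa\neq 0$ — there $\{T,N,B\}$ is an orthogonal basis of $E^{3}$ with $\langle N,N\rangle=\langle B,B\rangle=\kappa^{2}$ by \eqref{h2.4} — and then promote the resulting identities to all of $I$ by analyticity.

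First, suppose $\psi$ is a general helix with axis $U$. Differentiating $\langle T,U\rangle=\,$const and using $D_{T}T=N$ gives $\langle N,U\rangle=0$, so $U$ has no $N$-component and we may write $U=aT+cB$ for scalar functions $a,c$ with $a=\langle T,U\rangle$ constant. Since $U$ is fixed, $0=D_{T}U=aN+c'B+c\bigl(-\tau N+\tfrac{\kappa'}{\kappa}B\bigr)$, and equating $N$- and $B$-components yields
\[
a-c\tau=0,\qquad c'+c\,\frac{\kappa'}{\kappa}=0 .
\]
The second identity says $(c\kappa)'=0$, so $c\kappa=\mu$ is constant; if $\mu=0$ then $c\equiv 0$, hence $a=c\tau=0$ and $U=0$, which is impossible, so $\mu\neq 0$ and $\dfrac{\tau}{\kappa}=\dfrac{c\tau}{c\kappa}=\dfrac{a}{\mu}$ is constant.

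Conversely, suppose $\tau(s)/\kappa(s)=\lambda$. I will simply exhibit the axis: set $U=\lambda T+\dfrac1\kappa B$. Then, by \eqref{h3.1},
\[
D_{T}U=\lambda N-\frac{\kappa'}{\kappa^{2}}B+\frac1\kappa\Bigl(-\tau N+\frac{\kappa'}{\kappa}B\Bigr)=\Bigl(\lambda-\frac{\tau}{\kappa}\Bigr)N=0,
\]
so $U$ is a constant vector, and $\abs{U}^{2}=\lambda^{2}+\tfrac1{\kappa^{2}}\langle B,B\rangle=\lambda^{2}+1\neq 0$. Since $\langle T,U\rangle=\lambda$ is constant, $\langle T,\,U/\abs{U}\rangle=\lambda/\sqrt{\lambda^{2}+1}$ is constant, i.e.\ $\psi$ is a general helix with axis $U/\abs{U}$.

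The one place where care is needed is the behaviour at the discrete zeros of $\kappa$: there $N,B$ vanish, the quantities $\frac1\kappa$ and $\frac{\kappa'}{\kappa}$ are singular, and the decomposition $U=aT+cB$ degenerates, so the two computations above are literally valid only on $\{\kappa\neq 0\}$. This is handled by the analyticity already invoked in the Preliminaries — the squared lengths $\langle N,N\rangle$, $\langle B,B\rangle$ vary analytically and $\tau$ extends across its removable singularities — which lets one conclude that "$\tau/\kappa$ is constant" and "$\langle T,U\rangle$ is constant" hold on all of $I$. Beyond that, the argument is nothing more than substitution into \eqref{h3.1}.
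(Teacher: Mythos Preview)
Your proof is correct. In fact the paper does not supply a proof of this theorem at all: it states the result and then, in the remark immediately following, simply asserts that ``it can be easily proved with the modified orthogonal frame also.'' So there is nothing to compare your argument against; you have filled in what the paper omits.

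Your approach---writing the axis $U$ in the orthogonal basis $\{T,N,B\}$, differentiating using \eqref{h3.1}, and reading off $a-c\tau=0$, $(c\kappa)'=0$---is the natural one and is essentially the classical proof transported into the modified frame. The converse via the explicit candidate $U=\lambda T+\kappa^{-1}B$ is clean. The remark about the discrete zeros of $\kappa$ and analyticity is a nice touch and is in keeping with the paper's own setup in the Preliminaries; the paper itself never addresses this point in any of its later arguments.
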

\begin{remark} Lancret theorem is a celebrated theorem on helices having many proofs in different ambient spaces and frames. It can be easily proved with the modified orthogonal frame also.
\end{remark}
\begin{theorem}
A unit speed curve $\psi $ is a general helix according to the modified
orthogonal frame if and only if%
\begin{equation}\label{h3.5}
D_{T}D_{T}D_{T}T=\mu N+\frac{3\kappa ^{\prime }}{\kappa }D_{T}N\  \  
\end{equation}%
where%
\begin{equation}\label{h3.6}
\mu =\frac{\kappa ^{\prime \prime }}{\kappa }-\kappa ^{2}-\tau ^{2}\mathtt{-}%
3\left(\frac{\kappa ^{\prime}}{\kappa}\right)^2. 
\end{equation}
\end{theorem}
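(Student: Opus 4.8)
The plan is to turn \eqref{h3.5} into an identity that can be read off directly from the modified orthogonal frame equations \eqref{h3.1}, and then invoke the Lancret theorem (Theorem 3.1). Since $D_{T}T=N$, we have $D_{T}D_{T}D_{T}T=D_{T}(D_{T}N)$, so the only computation needed is one more covariant derivative of $D_{T}N=-\kappa^{2}T+\frac{\kappa^{\prime}}{\kappa}N+\tau B$. First I would differentiate this term by term, substituting $D_{T}T=N$, the expression for $D_{T}N$ itself, and $D_{T}B=-\tau N+\frac{\kappa^{\prime}}{\kappa}B$ from \eqref{h3.1} wherever those vectors appear, and then collect the $T$-, $N$- and $B$-components. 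This gives
\begin{equation*}
D_{T}D_{T}D_{T}T=-3\kappa\kappa^{\prime}T+\left[\left(\frac{\kappa^{\prime}}{\kappa}\right)^{\prime}+\left(\frac{\kappa^{\prime}}{\kappa}\right)^{2}-\kappa^{2}-\tau^{2}\right]N+\left[\tau^{\prime}+\frac{2\kappa^{\prime}\tau}{\kappa}\right]B .
\end{equation*}

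Next I would expand $\frac{3\kappa^{\prime}}{\kappa}D_{T}N$ in the same frame, namely $\frac{3\kappa^{\prime}}{\kappa}D_{T}N=-3\kappa\kappa^{\prime}T+3\left(\frac{\kappa^{\prime}}{\kappa}\right)^{2}N+\frac{3\kappa^{\prime}\tau}{\kappa}B$, and subtract. The $T$-components cancel exactly (this is precisely what forces the coefficient $3\kappa^{\prime}/\kappa$ in \eqref{h3.5}), and using $\left(\frac{\kappa^{\prime}}{\kappa}\right)^{\prime}=\frac{\kappa^{\prime\prime}}{\kappa}-\left(\frac{\kappa^{\prime}}{\kappa}\right)^{2}$ the $N$-component collapses to precisely $\mu$ as defined in \eqref{h3.6}. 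What survives is
\begin{equation*}
D_{T}D_{T}D_{T}T=\mu N+\frac{3\kappa^{\prime}}{\kappa}D_{T}N+\left(\tau^{\prime}-\frac{\kappa^{\prime}\tau}{\kappa}\right)B .
\end{equation*}
Hence \eqref{h3.5} holds if and only if $\left(\tau^{\prime}-\frac{\kappa^{\prime}\tau}{\kappa}\right)B=0$.

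To conclude, I would note that by \eqref{h2.2} the vector $B=\kappa\mathbf{b}$ vanishes only at the discrete zeros of $\kappa$, so off that discrete set $B\neq0$ and the displayed identity is equivalent to $\tau^{\prime}-\frac{\kappa^{\prime}\tau}{\kappa}=0$, i.e. to $\left(\frac{\tau}{\kappa}\right)^{\prime}=\frac{1}{\kappa}\left(\tau^{\prime}-\frac{\kappa^{\prime}\tau}{\kappa}\right)=0$; by continuity this extends over the isolated zeros. Thus \eqref{h3.5} is equivalent to $\frac{\tau}{\kappa}$ being a constant $\lambda\in R$, which by the Lancret theorem (Theorem 3.1) is exactly the statement that $\psi$ is a general helix.

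The computation is essentially routine; the only points asking for a little care are the exact cancellation of the $T$-components, the bookkeeping of the $\kappa^{\prime}/\kappa$ terms when simplifying the $N$-component (where the identity for $\left(\frac{\kappa^{\prime}}{\kappa}\right)^{\prime}$ is what makes the answer match \eqref{h3.6}), and the minor step of passing from the vector identity to the scalar condition $(\tau/\kappa)^{\prime}=0$ across the isolated zeros of $\kappa$.
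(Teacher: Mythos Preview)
Your argument is correct and reaches the same scalar criterion $(\tau/\kappa)^{\prime}=0$ that the paper obtains, but the organization is a bit different and, in fact, more economical. The paper treats the two directions separately: for the forward implication it computes $D_{T}D_{T}D_{T}T$ in the frame (your first display), then \emph{solves} the third equation of \eqref{h3.1} for $B$ in terms of $D_{T}N$ and substitutes that back, and only afterwards uses $\kappa^{\prime}/\kappa=\tau^{\prime}/\tau$ to kill the $T$-coefficient and reduce the $B$-coefficient. For the converse it again computes $D_{T}D_{T}N$ in two ways and compares $B$-components. You instead expand $\frac{3\kappa^{\prime}}{\kappa}D_{T}N$ directly in the frame and subtract once, which makes the $T$-cancellation automatic and produces the single residual $\bigl(\tau^{\prime}-\frac{\kappa^{\prime}\tau}{\kappa}\bigr)B$, so both implications fall out of one identity. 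The paper's substitution $B=\frac{1}{\tau}\bigl(D_{T}N+\kappa^{2}T-\frac{\kappa^{\prime}}{\kappa}N\bigr)$ implicitly assumes $\tau\neq 0$, whereas your difference formula avoids that division; your version also makes transparent \emph{why} the coefficient in front of $D_{T}N$ has to be $3\kappa^{\prime}/\kappa$ (it is exactly what cancels the $T$-part), a point the paper's presentation leaves hidden. Your remark about extending the scalar identity across the discrete zeros of $\kappa$ by continuity is a nice touch the paper omits.
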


\begin{proof}
Let $\psi $ be a general helix. From (\ref{h3.1}), we have,%
\begin{equation}\label{h3.7}
D_{T}\left( D_{T}T\right) =-\kappa ^{2}T+\frac{\kappa ^{\prime }}{\kappa }%
N+\tau B  
\end{equation}%
and%
\begin{eqnarray}\label{m12}
\nonumber D_{T}D_{T}D_{T}T &=&-2\kappa \kappa ^{\prime }T-\kappa ^{2}N+\left( \frac{%
\kappa ^{\prime \prime }}{\kappa }-\frac{\kappa ^{\prime ~^{2}}}{\kappa ^{2}}%
\right) N \\
\nonumber &&+\frac{\kappa ^{\prime }}{\kappa }\left( -\kappa ^{2}T+\frac{\kappa
^{\prime }}{\kappa }N+\tau B\right) +\tau ^{\prime }B+\tau \left( -\tau N+%
\frac{\kappa ^{\prime }}{\kappa }B\right).\\
\end{eqnarray}%
Combining like terms of (\ref{m12}), we get
\begin{eqnarray}\label{n1}
 D_{T}D_{T}D_{T}T=\left( -3\kappa \kappa ^{\prime }\right) T+\left( \frac{\kappa ^{\prime \prime }}{\kappa }-\kappa ^{2}-\tau ^{2}\right) N+
\left(2\frac{\kappa^\prime}{\kappa}\tau + \tau^\prime\right)B.
\end{eqnarray}%
Using the third equation of (\ref{h3.1}) in (\ref{n1}), we get 
\begin{eqnarray*}
 D_{T}D_{T}D_{T}T&=&\left( -3\kappa \kappa ^{\prime }\right) T+\left( \frac{\kappa ^{\prime \prime }}{\kappa }-\kappa ^{2}-\tau ^{2}\right) N\\
&&+\frac{1}{\tau}\left(2\frac{\kappa^\prime}{\kappa}\tau + \tau^\prime\right)\left( D_{T}N+\kappa ^{2}T-\frac{\kappa ^{\prime }}{\kappa }N\right).
\end{eqnarray*}
Combining similar terms, we have
\begin{eqnarray}\label{fqb}
\nonumber D_{T}D_{T}D_{T}T&=&\left(-3\kappa \kappa^\prime +\frac{2\tau \kappa^\prime +\tau^\prime \kappa}{\tau}\kappa\right)T\\
\nonumber &&+\left(\frac{\kappa^{\prime\prime}}{\kappa}-\tau^2 -\kappa^2 - \frac{2\tau \kappa^\prime +\tau^\prime\kappa}{\tau\kappa^2}\kappa^\prime\right)N+\left(\frac{2\tau \kappa^\prime
 +\tau^\prime\kappa}{\tau\kappa}\right)D_T N.\\
\end{eqnarray}
Now, since $\psi $ is a general helix, we have
$\frac{\tau }{\kappa }=\textrm{constant}$
and the derivation give rise to%
\begin{equation}\label{n3}
\kappa ^{\prime }\tau =\kappa \tau ^{\prime }\text{ or }\frac{\kappa
^{\prime }}{\kappa }=\frac{\tau ^{\prime }}{\tau }.
\end{equation}%
Substituting (\ref{n3}) in (\ref{fqb}), we get
\begin{equation}\label{h3.9}
D_{T}D_{T}D_{T}T=\left( \frac{\kappa ^{\prime \prime }}{\kappa }-\kappa
^{2}-\tau ^{2}\text{\texttt{-}}\frac{3\kappa ^{\prime ~^{2}}}{\kappa ^{2}}%
\right) N+\frac{3\kappa ^{\prime }}{\kappa }D_{T}N.
\end{equation}
This proves (\ref{h3.5}). Conversely, assume (\ref{h3.5}) is satisfied. We show that the curve $%
\psi $ is a general helix. Differentiating covariantly $N=D_T T$ in (\ref{h3.1}), we get
\begin{equation}\label{qw1}
D_{T}N=D_{T}D_{T}T
\end{equation}%
and so%
\begin{equation}\label{h3.10}
D_{T}D_{T}N=D_{T}D_{T}D_{T}T=\mu N+\frac{3\kappa ^{\prime }}{\kappa }D_{T}N 
\end{equation}
or
\begin{equation*}
D_{T}D_{T}N=\mu N+\frac{3\kappa ^{\prime }}{\kappa }D_{T}N.
\end{equation*}
Using the third equation of (\ref{h3.1}) in above equation, we obtain
\begin{equation*}
D_{T}D_{T}N=\mu N+\frac{%
3\kappa ^{\prime }}{\kappa }\left( -\kappa ^{2}T+\frac{\kappa ^{\prime }}{\kappa 
}N+\tau B\right).
\end{equation*}
Combining the like terms, we get
\begin{equation}\label{h3.11}
D_{T}D_{T}N=-3\kappa\kappa^\prime T + \left(\mu+3\left(\frac{\kappa^\prime}{\kappa}\right)^2\right)N+3\frac{\tau \kappa^\prime}{\kappa}B.  
\end{equation}
Using (\ref{qw1}), the equation in (\ref{n1}) ca be written as
\begin{equation}\label{h3.12}
D_{T}D_{T}N\bigskip =-3\kappa \kappa ^{\prime }T+\left( \frac{\kappa ^{\prime
\prime }}{\kappa }-\kappa ^{2}-\tau ^{2}\right)N +\left( \tau ^{\prime }+\frac{2\tau \kappa ^{\prime }}{\kappa }\right) B.   
\end{equation}%
Consequently from (\ref{h3.11}) and (\ref{h3.12}), we obtain%
\begin{equation*}
\frac{\kappa ^{\prime }}{\kappa }=\frac{\tau ^{\prime }}{\tau }
\end{equation*}%
and so
\begin{equation}\label{h3.13}
\left( \frac{\tau }{\kappa }\right) ^{\prime }=0.  
\end{equation}%
Integrating (\ref{h3.13}) equality, we get%
\begin{equation*}
\frac{\tau }{\kappa }=\textrm{ constant}.
\end{equation*}%
Hence $\psi $ is a general helix. 
\end{proof}

\begin{theorem}
Let $\psi $ be \ a unit speed curve, then $\psi $ is a general helix
according to the modified orthogonal frame if and only if 
\begin{equation}\label{da1}
det(T^{\prime },T^{\prime \prime },T^{\prime \prime \prime })=0.
\end{equation}
\end{theorem}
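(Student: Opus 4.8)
The plan is to compute the scalar triple product $\det(T',T'',T''')$ explicitly in the modified orthogonal frame $\{T,N,B\}$ and to recognize it as a multiple of $(\tau/\kappa)'$, after which the Lancret theorem stated above closes the argument. Since the covariant derivative along $\psi\subset E^{3}$ is just ordinary differentiation with respect to arc length, the frame equations (\ref{h3.1}) give $T'=N$ and $T''=D_{T}N=-\kappa^{2}T+\frac{\kappa'}{\kappa}N+\tau B$, while (\ref{n1}) already records
\[
T'''=D_{T}D_{T}D_{T}T=(-3\kappa\kappa')\,T+\Big(\frac{\kappa''}{\kappa}-\kappa^{2}-\tau^{2}\Big)N+\Big(2\frac{\kappa'}{\kappa}\tau+\tau'\Big)B .
\]
Hence, relative to the ordered basis $\{T,N,B\}$, the vectors $T',T'',T'''$ have coefficient matrix
\[
M=\begin{pmatrix}0&1&0\\ -\kappa^{2}&\frac{\kappa'}{\kappa}&\tau\\ -3\kappa\kappa'&\frac{\kappa''}{\kappa}-\kappa^{2}-\tau^{2}&2\frac{\kappa'}{\kappa}\tau+\tau'\end{pmatrix}.
\]

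The single point that needs care is that $\{T,N,B\}$ is orthogonal but \emph{not} orthonormal: by (\ref{h2.4}) we have $\langle N,N\rangle=\langle B,B\rangle=\kappa^{2}$, and since $B=T\times N$ it follows that $N\times B=N\times(T\times N)=\langle N,N\rangle\,T=\kappa^{2}\,T$, so the triple product of the frame vectors themselves is $[T,N,B]=\langle T,N\times B\rangle=\kappa^{2}$. The change-of-basis rule for determinants then gives
\[
\det(T',T'',T''')=(\det M)\,[T,N,B]=\kappa^{2}\det M .
\]

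Expanding $\det M$ along the first row is a short computation (only the $2\times 2$ minor coming from the last two rows and the last two columns survives), and it collapses to $\det M=\kappa^{2}\tau'-\kappa\kappa'\tau=\kappa^{3}\,(\tau/\kappa)'$, so that $\det(T',T'',T''')=\kappa^{5}\,(\tau/\kappa)'$. Since $\kappa$ vanishes only at isolated points, the determinant in (\ref{da1}) is identically zero if and only if $(\tau/\kappa)'\equiv 0$, i.e.\ $\tau/\kappa$ is a constant, which by the Lancret theorem above is exactly the condition for $\psi$ to be a general helix. I expect the determinant arithmetic to be the only genuine work; the conceptual subtlety one must not miss is the factor $\kappa^{2}=[T,N,B]$ produced by the non-unit lengths of $N$ and $B$.
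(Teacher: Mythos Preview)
Your argument is correct and follows the same overall route as the paper: expand $T',T'',T'''$ in the modified frame, form the coefficient determinant, and recognise it as a nonzero multiple of $(\tau/\kappa)'$, then invoke the Lancret theorem. Two small points are worth flagging.

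First, your treatment is actually cleaner than the paper's in one respect and sharper in another. The paper first assumes $\psi$ is a helix, uses $\kappa'/\kappa=\tau'/\tau$ to replace the $(3,3)$ entry by $3\tau'$, and only then computes the determinant, obtaining $3\kappa^{3}(\tau/\kappa)'$; the converse is handled separately and somewhat cursorily. By keeping the general entry $2\tfrac{\kappa'}{\kappa}\tau+\tau'$ and computing once, you get both directions simultaneously. More importantly, the paper identifies $\det(T',T'',T''')$ with $\det M$ outright, silently ignoring that $[T,N,B]=\kappa^{2}\neq 1$; you correctly insert this factor and arrive at $\kappa^{5}(\tau/\kappa)'$. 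Since $\kappa$ vanishes only at isolated points, neither discrepancy affects the vanishing criterion, so the theorem survives either way --- but your version is the honest computation.

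Second, a trivial slip of wording: when you expand $\det M$ along the first row, the surviving $2\times 2$ minor is built from columns $1$ and $3$ (rows $2$ and $3$), not from ``the last two columns''. Your numerical outcome $\det M=\kappa^{2}\tau'-\kappa\kappa'\tau$ is the correct one, so this is purely a phrasing issue.
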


\begin{proof}
Let $\left( \frac{\tau }{\kappa }\right) $ be constant$.$ We have the
equalities%
\begin{eqnarray*}
\psi ^{^{\prime }} &=&T \\
\psi ^{^{^{\prime \prime }}} &=&T^{\prime }=N \\
\psi ^{^{\prime \prime \prime }} &=&T^{\prime \prime }=-\kappa ^{2}T+\frac{%
\kappa ^{\prime }}{\kappa }N+\tau B
\end{eqnarray*}%
\begin{eqnarray*}
\psi ^{(4)} &=&T^{\prime \prime \prime }=-2\allowbreak \kappa \kappa
^{\prime }T-\kappa ^{2}N\allowbreak +\frac{1}{\kappa }\kappa ^{\prime \prime
}N-\frac{\kappa ^{\prime }\kappa ^{\prime }}{\kappa ^{2}}N\allowbreak \\
&&+\frac{\kappa ^{\prime }}{\kappa }\left( -\kappa ^{2}T+\frac{\kappa
^{\prime }}{\kappa }N+\tau B\right) +\tau \left(- \tau N+\frac{\kappa
^{\prime }}{\kappa }N\right) +\tau ^{\prime }B
\end{eqnarray*}%
\begin{equation*}
\psi^{(4)}=T^{\prime \prime \prime }=-3\allowbreak \kappa \kappa ^{\prime }T+\left( 
\frac{\kappa ^{\prime \prime }}{\kappa }-\kappa ^{2}-\tau ^{2}\right)
N+\left( \frac{2\kappa ^{\prime }\tau }{\kappa }+\tau ^{^{\prime }}\right)
B.\allowbreak
\end{equation*}%
Since $\psi $ is a general helix, i.e, $\frac{\kappa}{\tau}=c$ or $\frac{\kappa^\prime}{\kappa}=\frac{\tau^\prime}{\tau}$. Thus, we have%
\begin{equation*}
\frac{2\kappa ^{\prime }\tau }{\kappa }+\tau ^{\prime }=3\tau ^{\prime }.
\end{equation*}%
Hence, we get%
\begin{equation*}
T^{\prime \prime \prime }=-3\allowbreak \kappa \kappa ^{\prime }T+\left( 
\frac{\kappa ^{\prime \prime }}{\kappa }-\kappa ^{2}-\tau ^{2}\right)
N+3\tau ^{^{\prime }}B\allowbreak .~\  \ 
\end{equation*}%
The above equalities implies that 
\begin{equation}\label{h3.14}
\det (T^{\prime },T^{\prime \prime },T^{\prime \prime \prime })=\left \vert 
\begin{array}{ccc}
0 & 1 & 0 \\ 
-\kappa ^{2} & \frac{\kappa ^{\prime }}{\kappa } & \tau \\ 
-3\mathrm{\allowbreak }\kappa \kappa ^{\prime } & \frac{\kappa ^{\prime
\prime }}{\kappa }-\kappa ^{2}\mathrm{\allowbreak -}\tau ^{2} & 3\tau
^{\prime }%
\end{array}%
\right \vert =\allowbreak 3\kappa \left( \kappa \tau ^{\prime }-\tau \kappa
^{\prime }\right) =3\kappa ^{3}\left( \frac{\tau }{\kappa }\right) ^{\prime
}.  
\end{equation}%
Since $\left( \dfrac{\tau }{\kappa }\right) $ is constant, we have $\left( 
\dfrac{\tau }{\kappa }\right) ^{^{\prime }}=0$ and $\ det(T^{\prime
},T^{\prime \prime },T^{\prime \prime \prime })=0$.

Now, let $det(T^{\prime },T^{\prime \prime },T^{\prime \prime \prime })=0$. It is
clear that $\left( \dfrac{\tau }{\kappa }\right) $ is constant for being%
\begin{equation*}
\left( \frac{\tau }{\kappa }\right) ^{^{\prime }}=0.
\end{equation*}%
Thus, $\psi$ is a general helix.
\end{proof}
\bigskip
\begin{figure}[h]
\centering
  \includegraphics[width=5cm,height=5cm]{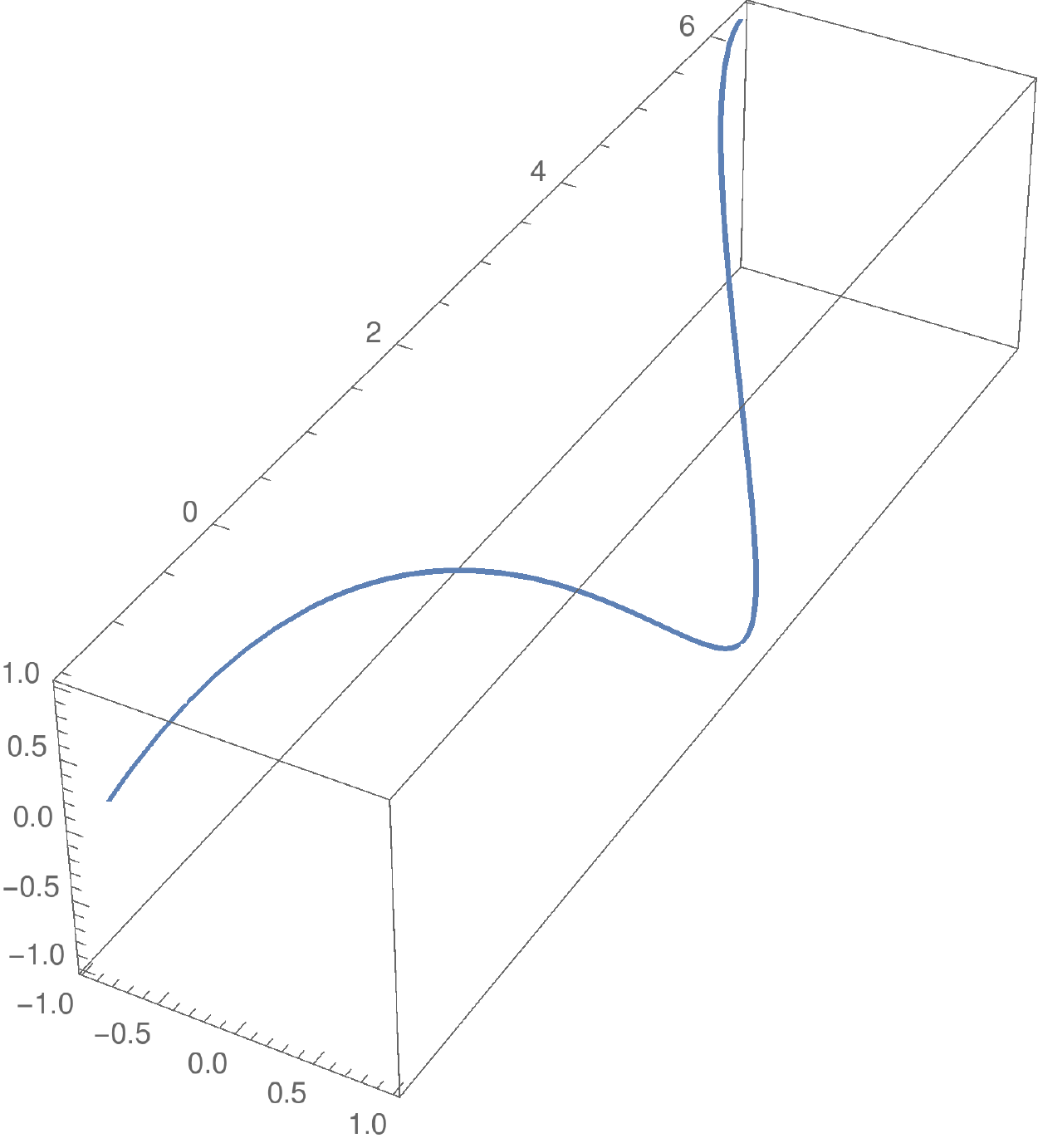}
  \caption{Circular Helix $:\left(\sin \frac{s}{\sqrt{2}}, \frac{s}{\sqrt{2}}, \cos \frac{s}{\sqrt{2}}\right).$}
\end{figure}
\begin{example}
Suppose a curve $\varphi$ be parameterized as
\begin{equation}\label{e1}
\varphi(s)=\left(\sin \frac{s}{\sqrt{2}}, \frac{s}{\sqrt{2}}, 
\cos \frac{s}{\sqrt{2}}\right).
\end{equation}
The Frenet frame vectors $\{{\bf t,n,b}\}$ of (\ref{e1}) are given by
\begin{eqnarray*}
{\bf t}&=&\left( \frac{1}{\sqrt{2}}\cos \frac{s}{\sqrt{2}},\frac{1}{\sqrt{2}},-\frac{1}{\sqrt{2}}\sin \frac{s}{\sqrt{2}}\right).\\
{\bf n} &=& \left( -\sin \frac{s}{\sqrt{2}},0,-\cos \frac{s}{\sqrt{2}}\right).\\
{\bf b} &=&\left( -\frac{1}{\sqrt{2}}\cos \frac{1}{\sqrt{2}},\frac{1}{\sqrt{2}},\frac{1}{\sqrt{2}}\sin \frac{1}{\sqrt{2}}s\right).
\end{eqnarray*}
Also the curvature and torsion is given by
\begin{equation}\label{o}
\kappa(s)=\frac{1}{2}, \quad \tau(s)=\frac{1}{2}.
\end{equation}
Therefore the modified orthogonal frame vectors are given by
\begin{eqnarray*}
T(s)&=&{\bf t}=\left(\frac{1}{\sqrt{2}} \cos \frac{s}{\sqrt{2}}, \frac{1}{\sqrt{2}}, -\frac{1}{\sqrt{2}} \sin \frac{s}{\sqrt{2}}\right).\\
N(s)&=&\kappa {\bf n}=\left(-\frac{1}{2} \sin \frac{s}{\sqrt{2}}, 0, -\frac{1}{2} \cos \frac{s}{\sqrt{2}}\right).\\
B(s)&=&\kappa {\bf b}=\left(-\frac{1}{2\sqrt{2}} \cos \frac{s}{\sqrt{2}}, \frac{1}{2\sqrt{2}}, \frac{1}{2\sqrt{2}} \sin \frac{s}{\sqrt{2}}\right).
\end{eqnarray*}
From the above frame vectors and (\ref{o}), we see that $\varphi$ is arc length parameterized and a helix, respectively. Substituting the above quantities in (\ref{h3.6}), we see that equality in (\ref{h3.5}) is satisfied.

The higher derivative of $T(s)$ are given by
\begin{eqnarray}\label{1t1}
\left \{ 
\begin{array}{c}
T^\prime(s)=\left(-\frac{1}{2} \sin \frac{s}{\sqrt{2}},0,-\frac{1}{2} \cos \frac{s}{\sqrt{2}}\right)\\
T^{\prime \prime}(s)= \left(-\frac{1}{2\sqrt{2}} \cos \frac{s}{\sqrt{2}},0,\frac{1}{2\sqrt{2}} \sin \frac{s}{\sqrt{2}}\right)\\
T^{\prime \prime \prime}(s)=\left(\frac{1}{4} \sin \frac{s}{\sqrt{2}},0,\frac{1}{4} \cos \frac{s}{\sqrt{2}}\right).
\end{array}
\right.
\end{eqnarray}
From (\ref{1t1}), it is easy to verify (\ref{da1}).
\end{example}
\begin{example}
\begin{figure}[h]
\centering
  \includegraphics[width=5cm,height=7cm]{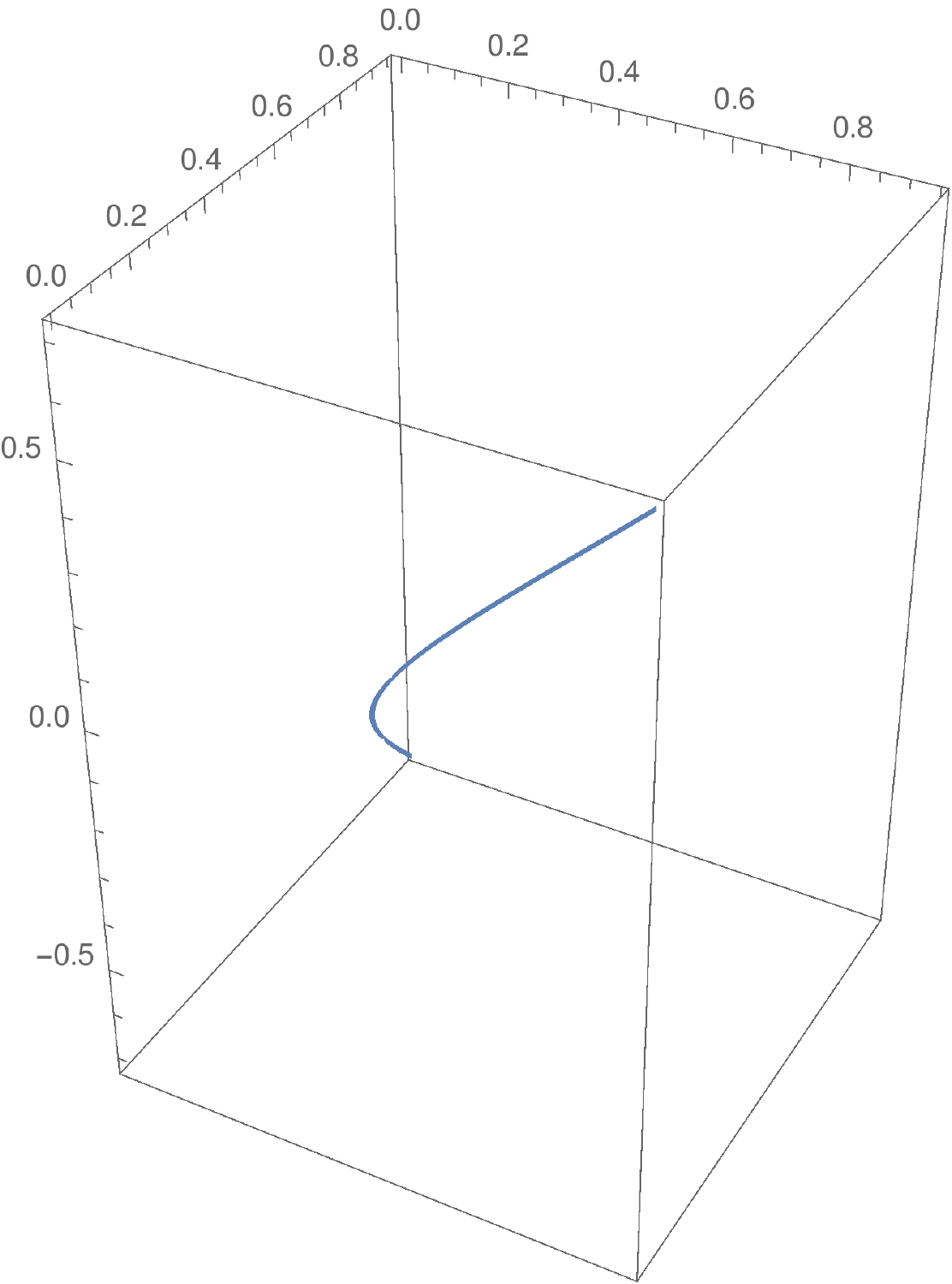}
  \caption{General Helix$:\left( \frac{\left( 1+s\right) ^{\frac{3}{2}}}{3},\frac{\left(
1-s\right) ^{\frac{3}{2}}}{3},\frac{s}{\sqrt{2}}\right).$}
\end{figure}
  Let us consider a curve parameterized as
\begin{equation}\label{lm0}
\psi (s)=\left( \frac{\left( 1+s\right) ^{\frac{3}{2}}}{3},\frac{\left(
1-s\right) ^{\frac{3}{2}}}{3},\frac{s}{\sqrt{2}}\right), 
\end{equation}
where $-1<s<1$.
The Frenet frame vectors $\{{\bf t, n, b}\}$ of (\ref{lm0}) are found as:
\begin{eqnarray*}
\left\{ 
\begin{array}{c}
{\bf t}=\frac{1}{2}\left( \sqrt{1+s},-\sqrt{1-s},\sqrt{2}\right) \\ 
{\bf n}=\frac{1}{\sqrt{2}}\left( \sqrt{1-s},\sqrt{1+s},0\right) \\ 
{\bf b}={\bf t}\times {\bf n}=\frac{1}{2}\left( -\sqrt{1+s},\sqrt{1-s},\frac{\sqrt{2}}{2}
\right) 
\end{array}
\right. 
\end{eqnarray*}
 and $\kappa =\tau =\frac{1}{\sqrt{8\left( 1-s^{2}\right) }}.$
 
Therefore the modified orthogonal frame vectors are given by
\begin{eqnarray*}
T&=&{\bf t}=\left( \frac{\sqrt{1+s}}{2},-\frac{\sqrt{1-s}}{2},\frac{\sqrt{2}}{2}\right).\\
N&=&\kappa {\bf n}= \frac{1}{4}\left( \frac{1}{\sqrt{1+s}},\frac{1}{\sqrt{1-s}},0\right).\\
B&=&\kappa {\bf b}=\frac{1}{4\sqrt{2}}\left( -\frac{1}{\sqrt{1-s}},\frac{1}{\sqrt{1+s}},\frac{\sqrt{2}}{\sqrt{1-s^{2}}}\right). 
\end{eqnarray*}
Also, one can easily find
\begin{eqnarray*}
\dfrac{d\psi }{ds} &=&\mathrm{T}=\frac{1}{2}\allowbreak \left( \sqrt{1+s},-%
\sqrt{1-s},\sqrt{2}\right)  \\
\dfrac{d^{2}\psi }{ds^{2}} &=&T^{\prime }=\frac{1}{4}\left( \frac{1}{\sqrt{%
1+s}},\frac{1}{\sqrt{1-s}},0\right)  \\
\dfrac{d^{3}\psi }{ds^{3}} &=&T^{\prime \prime }=\frac{1}{8}\left( -\frac{1}{%
\left( 1+s\right) ^{\frac{3}{2}}},\frac{1}{\left( 1-s\right) ^{\frac{3}{2}}}%
,0\right)  \\
\dfrac{d^{4}\psi }{ds^{4}} &=&T^{\prime \prime \prime }=\frac{3}{16}\left( 
\frac{1}{\left( 1+s\right) ^{\frac{5}{2}}},\frac{1}{\left( 1-s\right) ^{%
\frac{5}{2}}},0\right) 
\end{eqnarray*}%
Hence $\det (T^{\prime },T^{\prime \prime },T^{\prime \prime \prime })\ $is
zero. This verifies (\ref{da1}).
Again substituting the required quantities in (\ref{h3.6}), equation (\ref{h3.5}) is a straightforward verification. 
\end{example}
\begin{theorem}
Let $\psi :I\rightarrow E^{3}$  be a unit speed curve in $E^3$ such that the curvature  and torsion  of the curve is a non-zero constant and  non-constant, respectively. Then $\psi $ is a slant helix according to modified orthogonal frame if and only if the function 
\begin{equation}\label{h3.15}
\frac{\tau ^{\prime }}{\left( \kappa ^{2}+\tau ^{2}\right) ^{%
\frac{3}{2}}} 
\end{equation}
is constant.
\end{theorem}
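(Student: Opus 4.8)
The plan is to reduce the statement to the Izumiya--Takeuchi characterization of slant helices recalled in \eqref{helix}, and, in keeping with the rest of the paper, to re-derive that characterization directly from the modified orthogonal frame equations \eqref{h3.1}.

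\textbf{Quick route.} By \eqref{helix}, $\psi$ is a slant helix if and only if the function $\dfrac{\kappa^{2}}{(\kappa^{2}+\tau^{2})^{3/2}}\left(\dfrac{\tau}{\kappa}\right)^{\prime}$ is constant. Since $\kappa$ is a non-zero constant, $\left(\frac{\tau}{\kappa}\right)^{\prime}=\frac{\tau^{\prime}}{\kappa}$, so this function equals $\dfrac{\kappa\,\tau^{\prime}}{(\kappa^{2}+\tau^{2})^{3/2}}$; because $\kappa$ is a non-zero constant, it is constant exactly when $\dfrac{\tau^{\prime}}{(\kappa^{2}+\tau^{2})^{3/2}}$ is. That already settles both directions.

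\textbf{Intrinsic route.} For a self-contained argument in the present framework, I would start from \eqref{h3.1} with $\kappa^{\prime}=0$, so that $D_{T}T=N$, $D_{T}N=-\kappa^{2}T+\tau B$ and $D_{T}B=-\tau N$, together with \eqref{h2.4}: $\langle T,T\rangle=1$, $\langle N,N\rangle=\langle B,B\rangle=\kappa^{2}$, the remaining inner products vanishing. A slant helix is a curve admitting a fixed unit vector $U$ for which the angle between $\mathbf{n}=N/\kappa$ and $U$ is constant; since $\kappa$ is constant this is equivalent to constancy of $\langle N,U\rangle$. Writing $U=\lambda T+\mu N+\nu B$, one has $\langle N,U\rangle=\mu\kappa^{2}$, so the slant-helix condition forces $\mu$ to be a constant. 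Imposing $D_{T}U=0$ and separating the $T,N,B$ components yields
\[
\lambda^{\prime}=\mu\kappa^{2},\qquad \lambda=\nu\tau,\qquad \nu^{\prime}=-\mu\tau.
\]
Eliminating $\lambda$ from the first two with the aid of the third gives $\nu=\dfrac{\mu(\kappa^{2}+\tau^{2})}{\tau^{\prime}}$, and substituting into $\nu^{\prime}=-\mu\tau$ reduces, after simplification, to $(\kappa^{2}+\tau^{2})\tau^{\prime\prime}=3\tau(\tau^{\prime})^{2}$, which is exactly $\dfrac{d}{ds}\dfrac{\tau^{\prime}}{(\kappa^{2}+\tau^{2})^{3/2}}=0$. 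Conversely, if $\dfrac{\tau^{\prime}}{(\kappa^{2}+\tau^{2})^{3/2}}$ is a constant, I would fix a constant $\mu$, set $\nu:=\mu(\kappa^{2}+\tau^{2})/\tau^{\prime}$ and $\lambda:=\nu\tau$, and check by reversing the computation that $U:=\lambda T+\mu N+\nu B$ satisfies $D_{T}U=0$; then $U$ is a fixed vector, $|U|$ is constant, and $\langle N,U\rangle=\mu\kappa^{2}$ is constant, so $\psi$ is a slant helix.

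\textbf{Where the difficulty lies.} The crux is the hypothesis that $\kappa$ be a non-zero constant: it is what allows ``constant angle with $\mathbf{n}$'' to be replaced by ``$\langle N,U\rangle$ constant'' for the non-unit modified normal $N$, and it is what collapses $\left(\frac{\tau}{\kappa}\right)^{\prime}$ to $\frac{\tau^{\prime}}{\kappa}$. Two smaller issues must be addressed: one should observe that $\mu\neq0$ in the forward direction (otherwise $U=0$ or $\tau$ is constant, contrary to hypothesis), and one divides by $\tau^{\prime}$, which is legitimate on the non-empty open set where $\tau^{\prime}\neq0$, the identity then propagating to all of $I$ by continuity (indeed analyticity). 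Everything else is the routine differentiation indicated above.
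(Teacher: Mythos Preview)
Your intrinsic route follows the same overall strategy as the paper: write the fixed direction as $U=\lambda T+\mu N+\nu B$, impose $D_TU=0$, and read off the system $\lambda'=\mu\kappa^{2}$, $\lambda=\nu\tau$, $\nu'=-\mu\tau$. The one substantive difference is in the algebra that follows. You eliminate $\lambda$ and $\nu$ through the ODEs, landing on the second-order relation $(\kappa^{2}+\tau^{2})\tau''=3\tau(\tau')^{2}$, i.e.\ $\bigl(\tau'/(\kappa^{2}+\tau^{2})^{3/2}\bigr)'=0$; the paper instead invokes the extra information $|U|^{2}=\lambda^{2}+(\mu^{2}+\nu^{2})\kappa^{2}=\text{const}$ to solve directly for $\nu=\pm m/\sqrt{\kappa^{2}+\tau^{2}}$ and then differentiates once. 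The paper's shortcut through $|U|$ sidesteps the division by $\tau'$ that your elimination requires (and that you patch via analyticity), so it is marginally cleaner, but the two arguments are otherwise equivalent. For the converse both constructions agree up to the relation $m\cdot\bigl(\tau'/(\kappa^{2}+\tau^{2})^{3/2}\bigr)=\mu$. Your quick route---reducing to the Izumiya--Takeuchi criterion \eqref{helix} and using $\kappa'=0$---is a genuinely shorter alternative that the paper does not take; it trades the self-contained derivation for an appeal to the Frenet-frame result already quoted in the introduction.
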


\begin{proof}
Let $\psi $ be the given unit speed curve in $E^{3}$. Let $U$ be the vector field
such that the function $\left \langle N(s),U\right \rangle =c$ is constant.
There exist smooth functions $a$ and $b$ such that%
\begin{equation}\label{h3.16}
U=a(s)T(s)+cN(s)+b(s)B(s),\  \ s\in I. 
\end{equation}%
As $U$ is constant, a differentiation of (\ref{h3.16}) together (\ref{h2.3}) gives%
\begin{equation}\label{h3.17}
\left \{ 
\begin{array}{c}
a^{\prime }-c\kappa ^{2}=0 \\ 
a-b\tau \  \ =0 \\ 
b^{\prime }+c\tau =0%
\end{array}%
\right.  
\end{equation}%
From the second equation in (\ref{h3.17}), we get%
\begin{equation}\label{h3.18}
a=\tau b.  
\end{equation}%
Moreover, since $U$ is a constant vector, we have%
\begin{equation}\label{h3.19}
\left \langle U,U\right \rangle =a^{2}+c^{2}\kappa ^{2}+b\kappa ^{2}=\textrm{constant}
\text{.}  
\end{equation}%
We point out that this constraint, together with the second and third equation of
(\ref{h3.17}) is equivalent to the very system (\ref{h3.17}). Combining (\ref{h3.17})
and (\ref{h3.19}), let $m$ be the constant given by%
\begin{equation*}
b^{2}\left( \kappa ^{2}+\tau ^{2}\right) =\text{constant}-c^{2}\kappa
^{2}=m^{2}.
\end{equation*}%
This gives%
\begin{equation*}
b=\pm \frac{m}{\sqrt{\kappa ^{2}+\tau ^{2}}}.
\end{equation*}%
The third equation in (\ref{h3.17}) yields
\begin{equation*}
\frac{d}{ds}\left(\pm\frac{m}{\sqrt{\kappa^2+\tau^2}}\right)=c\tau
\end{equation*}%
on $I$. This can be written as
\begin{equation*}
\pm \frac{c}{m}=\frac{\tau ^{^{\prime }}}{\left( \kappa ^{2}+\tau
^{2}\right) ^{\frac{3}{2}}},\  \  \  \  \  \  \  \  \  \ 
\end{equation*} which proves (\ref{h3.15}).
Conversely, assume that the condition (\ref{h3.15}) is
satisfied. In order to simplify the computations, we assume that the
function in (\ref{h3.15}) is a constant, namely $c$ (the other case is
analogous). Define%
\begin{equation}\label{h3.20}
U=\frac{m\tau }{\sqrt{\kappa ^{2}+\tau ^{2}}}T(s)+\frac{m}{\sqrt{\kappa
^{2}+\tau ^{2}}}B(s)+cN(s),\  \ s\in I. 
\end{equation}%
A differentiation of (\ref{h3.20}) together with the modified orthogonal frame gives 
\begin{equation*}
U^{^{\prime }}=\dfrac{m\tau ^{\prime }\kappa ^{2}}{\left( \kappa ^{2}+\tau
^{2}\right) ^{\frac{3}{2}}}T(s)-\dfrac{m\tau \tau ^{\prime }}{\left( \kappa
^{2}+\tau ^{2}\right) ^{\frac{3}{2}}}B(s)+\dfrac{m\tau }{\sqrt{\kappa
^{2}+\tau ^{2}}}N(s)
\end{equation*}%
\begin{equation*}
+\dfrac{m}{\sqrt{\kappa ^{2}+\tau ^{2}}}\left( -\tau N(s)\right) +\dfrac{%
m\tau ^{\prime }}{\left( \kappa ^{2}+\tau ^{2}\right) ^{3/2}}\left(-\kappa
^{2}T(s)+\underbrace{\frac{\kappa^\prime}{\kappa}N}_0+\tau B(s)\right)=0
\end{equation*}%
that is, $U$ is a constant vector. On the other hand,%
\begin{equation*}
\left \langle N(s),U\right \rangle =\left \langle N(s),\frac{m\tau }{\sqrt{\kappa
^{2}+\tau ^{2}}}T(s)+cN(s)+\frac{m}{\sqrt{\kappa ^{2}+\tau ^{2}}}%
B(s)\right \rangle =c.
\end{equation*}%
This means that $\psi $ is a slant helix. 
\begin{remark}
For $\kappa=1$, we see that with respect to the modified orthogonal frame, the condition for slant helix in (\ref{h3.15}) is equal to the  condition for slant helix with respect to Frenet-Serret frame in (\ref{helix}). 
\end{remark}
\begin{example} 
Salkowski \cite{8j} introduced a family of curves with 
constant curvature but non-constant torsion and called them as  Salkowski curves. Later, Monterde \cite{5b} characterized Salkowski curves as the space curves with constant curvature and whose normal vector makes a constant angle with a fixed line. Thus, we can give Salkowski curves as striking example of slant helices.
\end{example}
\end{proof}
\section{Bertrand curves with modified orthogonal frame}
\begin{definition}\label{def1}Let $\varphi$ and $\psi$ be two curves with non-vanishing curvatures and torsions and $N_{\varphi}$ and $N_{\psi}$ are normals of $\varphi$ and $\psi$, respectively. If $N_{\varphi}$ and $N_{\psi}$ are parallel, then 
 $\left( \varphi \ ,\psi \right) $ is called Bertrand pair \cite{3}. 
\end{definition}
From the definition of Bertrand pair $\left( \varphi(s) \ ,\psi(s^\ast) \right) $, there is a functional relation $s^\ast =s^\ast(s)$
such that $$\delta^\ast(s^\ast(s))=\delta(s).$$
Let
$\left( \varphi \ ,\psi \right) $ be a Bertrand pair, we can write 
\begin{equation}\label{m1}
\psi (s)=\varphi (s)+\delta (s)N_{\varphi}(s).\end{equation}
\begin{theorem}\label{thm1}The distance between the corresponding points of a Bertrand pair $\left( C_{\varphi}  ,C_{\psi} \right) $ with respect to the modified orthogonal frame is constant.
\end{theorem}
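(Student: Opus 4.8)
The plan is to imitate the classical Bertrand-distance argument, but carried out entirely inside the modified orthogonal frame $\{T,N,B\}$ of $\varphi$. Write $\psi(s)=\varphi(s)+\delta(s)N_\varphi(s)$ as in \eqref{m1}; the goal is to show $\delta$ is constant. First I would differentiate this relation with respect to $s$, using the modified Frenet equations \eqref{h2.3} for $\varphi$, namely $T_\varphi'=N_\varphi$, $N_\varphi'=-\kappa^2 T_\varphi+\tfrac{\kappa'}{\kappa}N_\varphi+\tau B_\varphi$. This gives
\begin{equation*}
\psi'(s)=\bigl(1-\delta\kappa^2\bigr)T_\varphi+\Bigl(\delta'+\delta\tfrac{\kappa'}{\kappa}\Bigr)N_\varphi+\delta\tau B_\varphi .
\end{equation*}
Since $\psi'(s)$ is (a reparametrization of) the tangent $T_\psi$ of $\psi$, and by the Bertrand condition $N_\psi$ is parallel to $N_\varphi$, the tangent $T_\psi$ must be orthogonal to $N_\varphi$. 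Taking the inner product of the displayed expression for $\psi'$ with $N_\varphi$ and using the orthogonality relations \eqref{h2.4} (so $\langle T_\varphi,N_\varphi\rangle=\langle B_\varphi,N_\varphi\rangle=0$ and $\langle N_\varphi,N_\varphi\rangle=\kappa^2$) yields
\begin{equation*}
0=\langle \psi',N_\varphi\rangle=\Bigl(\delta'+\delta\tfrac{\kappa'}{\kappa}\Bigr)\kappa^2 ,
\end{equation*}
hence $\delta'+\delta\kappa'/\kappa=0$, i.e. $(\delta\kappa)'=0$, so $\delta\kappa$ is constant.

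From here the remaining task is to upgrade ``$\delta\kappa$ constant'' to ``$\delta$ constant.'' The subtlety is that in the modified orthogonal frame the normal $N_\varphi=\kappa\,\mathbf n$ is not a unit vector, so the distance between corresponding points is $\|\psi(s)-\varphi(s)\|=|\delta(s)|\,\|N_\varphi(s)\|=|\delta(s)|\,|\kappa(s)|$, which is exactly the quantity just shown to be constant. So in fact the statement ``the distance is constant'' follows immediately from $(\delta\kappa)'=0$, without needing $\delta$ itself to be constant — this is the point where the modified frame genuinely differs from the classical one (where $\delta$ is constant because the classical normal is a unit vector). I would phrase the conclusion in terms of $d(s):=\|\psi(s)-\varphi(s)\|=|\delta(s)\kappa(s)|$ and conclude $d'(s)=0$.

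For robustness I would also verify the argument is consistent with Definition \ref{def1}: the hypothesis there is that $N_\varphi$ and $N_\psi$ are parallel, equivalently $\mathbf n_\varphi$ and $\mathbf n_\psi$ are parallel, which is the usual Bertrand hypothesis and does legitimately force $\langle T_\psi,N_\varphi\rangle=0$ as used above. One should note the standing assumption $\kappa\neq0$ (or at worst discrete zeros) so that dividing by $\kappa^2$ is justified on a dense set, and then continuity of $\delta\kappa$ extends the identity $(\delta\kappa)'=0$ across any removable zeros.

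The main obstacle I anticipate is purely bookkeeping: keeping straight which frame each vector belongs to (everything is expanded in the $\varphi$-frame, while $T_\psi,N_\psi$ live a priori in the $\psi$-frame) and correctly using the non-unit normalizations $\langle N,N\rangle=\langle B,B\rangle=\kappa^2$ from \eqref{h2.4} rather than the Frenet normalizations. There is no deep difficulty beyond that — the computation is short once the orthogonality $\langle T_\psi, N_\varphi\rangle = 0$ is invoked.
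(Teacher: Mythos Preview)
Your proposal is correct and follows essentially the same route as the paper: differentiate $\psi=\varphi+\delta N_\varphi$ via the modified Frenet equations, pair with $N_\varphi$ using $N_\psi\parallel N_\varphi$ to kill the $N_\varphi$-coefficient, and then read off that $\delta\kappa$ (equivalently, the paper's $\delta=c/\kappa_\varphi$) is constant, which is exactly the distance $\|\psi-\varphi\|=|\delta|\,\|N_\varphi\|$. Your explicit remark that it is $\delta\kappa$, not $\delta$, that is constant is a helpful clarification the paper leaves implicit.
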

\begin{proof}
Let $C_{\varphi }$ and $C_{\psi}$ be given $\varphi(s)$ and $\psi(s)$, respectively. Using (\ref{m1}), we can write
\begin{equation*}
\psi=\varphi+\delta N_{\varphi }.
\end{equation*}%
Differentiating, and using modified orthogonal frame, we obtain%
\begin{equation}\label{3.1}
\psi ^{\prime }(s)=\left( 1-\delta \kappa _{\varphi }^{2}\right) T_{\varphi
}+\left( \delta ^{\prime }+\frac{\delta \kappa _{\varphi }^{\prime }}{%
\kappa _{\varphi }}\right) N_{\varphi }+\delta \tau _{\varphi }B_{\varphi }. 
\end{equation}%
Taking the inner product (\ref{3.1}) with $N_{\varphi}$, and using $N_{\varphi}\parallel N_{\psi}$, we obtain
\begin{equation*}
\left \langle N_{\varphi },\psi ^{\prime }(s)\right \rangle =\delta
^{\prime }+\frac{\delta \kappa _{\varphi }^{\prime }}{\kappa _{\varphi }}=0.
\end{equation*}%
This implies
\begin{equation}\label{3.2}
-\frac{\delta ^{\prime }}{\delta }=\frac{\kappa _{\varphi }^{\prime }}{%
\kappa _{\varphi }}.  
\end{equation}%
Integrating (\ref{3.2}), we get 
\begin{equation}\label{3.3}
\delta (s)=\frac{c}{\kappa _{\varphi }(s)},  
\end{equation}%
where $c$ a real number except zero,
Therefore%
\begin{equation*}
\left \Vert \psi -\varphi (s)\right \Vert =\left \Vert\frac{c}{\kappa _{\varphi}(s)}N_{\varphi }(s)\right \Vert = \left \vert c\right \vert\left \vert \frac{1}{\kappa_\varphi}\right \vert\left \vert \kappa_\varphi\right \vert        = \left \vert c\right \vert .
\end{equation*}
\end{proof}
\begin{lemma}\label{lem1}
The angle between tangent lines of a Bertran pair $(\varphi, \psi)$ is constant.
\end{lemma}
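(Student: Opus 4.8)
The plan is to show that $\langle T_\varphi, T_\psi\rangle$ is constant along the pair by differentiating it and showing the derivative vanishes. First I would recall from Theorem \ref{thm1} that $\delta(s) = c/\kappa_\varphi(s)$, so that $\delta' + \delta\kappa_\varphi'/\kappa_\varphi = 0$; substituting this into \eqref{3.1} collapses the normal component of $\psi'$, leaving $\psi'(s) = (1-\delta\kappa_\varphi^2)T_\varphi + \delta\tau_\varphi B_\varphi$. Since $\psi$ need not be unit speed in the parameter $s$, I would write $T_\psi = \psi'/\|\psi'\|$, but for the purpose of computing the angle it is cleaner to work with the un-normalized tangent and note that $\cos\theta = \langle T_\varphi, \psi'\rangle / \|\psi'\|$; however, the slickest route is to differentiate $\langle T_\varphi, T_\psi\rangle$ directly using the frame equations.

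The key computation: from the modified orthogonal frame \eqref{h2.3}, $T_\varphi' = N_\varphi$, and by the Bertrand condition $N_\psi \parallel N_\varphi$. Differentiating $\langle T_\varphi, T_\psi\rangle$ with respect to the common geometric parameter gives $\langle T_\varphi', T_\psi\rangle + \langle T_\varphi, T_\psi'\rangle$. The first term is $\langle N_\varphi, T_\psi\rangle$, which vanishes because $T_\psi$ is (a multiple of) $\psi'$ and we just saw $\psi'$ has no $N_\varphi$-component — equivalently $T_\psi \perp N_\psi \parallel N_\varphi$. The second term is $\langle T_\varphi, T_\psi'\rangle$; since $T_\psi' $ is parallel to $N_\psi$ (the $N$-direction in the modified frame, up to the $\kappa'/\kappa$ term along $T_\psi$ and $B_\psi$ — wait, one must be careful here), one reduces it using $T_\varphi \perp N_\varphi \parallel N_\psi$ and $T_\varphi \perp B_\varphi$. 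The care needed is that in the \emph{modified} frame $N_\psi' = -\kappa_\psi^2 T_\psi + (\kappa_\psi'/\kappa_\psi)N_\psi + \tau_\psi B_\psi$, so $T_\psi'$ is not purely normal; but $T_\psi' = N_\psi$ exactly in this frame, which \emph{is} parallel to $N_\varphi \perp T_\varphi$, so $\langle T_\varphi, T_\psi'\rangle = 0$ after accounting for the reparametrization factor $ds^\ast/ds$. Hence $\frac{d}{ds}\langle T_\varphi, T_\psi\rangle = 0$.

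Concretely I would argue: write $\psi' = \varepsilon T_\psi$ where $\varepsilon = \|\psi'\| = ds^\ast/ds$, so $T_\psi' = \varepsilon^{-1}(\psi'' - \varepsilon' T_\psi)$ with respect to $s$, and $\psi'' $ expanded via \eqref{h2.3} lies in $\mathrm{span}\{T_\psi, N_\psi\}$ up to the $B_\psi$ term — again the relevant point is only that the component of $T_\psi'$ along $T_\varphi$ direction pairs to zero because $T_\varphi$ lies in $\mathrm{span}\{T_\psi, B_\psi\}$ (both orthogonal to $N_\varphi \parallel N_\psi$), and the frame derivatives only introduce $N_\psi$ and the already-present $T_\psi, B_\psi$ directions. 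Then $\langle T_\varphi, T_\psi\rangle' = \langle N_\varphi, T_\psi\rangle + \varepsilon\langle T_\varphi, T_\psi'\rangle$, and both summands vanish by the orthogonality relations $N_\varphi \perp T_\psi$ and $T_\varphi \perp N_\psi$. Therefore $\langle T_\varphi, T_\psi\rangle$ is constant, and since $|T_\varphi| = |T_\psi| = 1$, the angle between the tangent lines is constant.

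The main obstacle I anticipate is bookkeeping the non-unit-speed reparametrization $s^\ast = s^\ast(s)$ together with the non-orthonormality of the modified frame (recall $\langle N,N\rangle = \kappa^2 \neq 1$): one must be disciplined about which parameter each derivative is taken with respect to, and must verify that the extra $\kappa'/\kappa$ terms and the scaling factors never produce a surviving $\langle T_\varphi, N_\psi\rangle$ or $\langle N_\varphi, T_\psi\rangle$ contribution — which they cannot, since those inner products are identically zero by \eqref{h2.4} applied to each curve's own frame combined with $N_\varphi \parallel N_\psi$. A cleaner alternative, which I might adopt to sidestep the reparametrization entirely, is to project \eqref{3.1} (after using \eqref{3.3}) onto $T_\varphi$ and $B_\varphi$ directly, obtaining $\langle T_\varphi, \psi'\rangle = 1 - \delta\kappa_\varphi^2 = 1 - c\kappa_\varphi$ and $\|\psi'\|^2 = (1-c\kappa_\varphi)^2 + c^2\tau_\varphi^2/\kappa_\varphi^2 \cdot \kappa_\varphi^2$, hmm, this still carries $\kappa_\varphi$-dependence, so the genuinely clean statement really is that $\langle T_\varphi, T_\psi\rangle$ is constant, proved via the derivative argument above rather than by a closed-form angle formula.
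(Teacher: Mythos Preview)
Your approach is essentially the same as the paper's: differentiate $\langle T_\varphi, T_\psi\rangle$, use $T_\varphi' = N_\varphi$ and $T_\psi' = (ds^\ast/ds)N_\psi$ from the modified frame, and observe that both resulting terms $\langle N_\varphi, T_\psi\rangle$ and $\langle T_\varphi, N_\psi\rangle$ vanish because $N_\varphi \parallel N_\psi$ and each tangent is orthogonal to its own normal. The paper's proof is just the terse two-line version of your derivative argument; your extra discussion of the reparametrization factor and the closed-form attempt at the end are unnecessary detours, but the core argument you settle on is exactly the one in the paper.
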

\begin{proof}
Differentiaiting $\left \langle T_{\psi },T_{\varphi }\right \rangle$, we get  
\begin{equation*}
\frac{d}{ds}\left \langle T_{\psi },T_{\varphi }\right \rangle =\left
\langle T_{\psi },N_{\varphi }\right \rangle +\left \langle T_{\psi
},N_{\varphi }\right \rangle .
\end{equation*}%
Since $\frac{N_{\psi }}{\kappa _{\psi }}=\pm \frac{N_{\varphi }}{\kappa
_{\varphi }},$ $\left \langle N_{\psi },T_{\varphi }\right \rangle =0$ and $%
\left \langle N_{\varphi },T_{\psi }\right \rangle =0$ $.$ Thus we get $%
\frac{d}{ds}\left \langle T_{\varphi },T_{\psi }\right \rangle =0$ and so $%
\left \langle T_{\varphi },T_{\psi }\right \rangle =$constant.
\end{proof}
\begin{theorem}A pair of curves $(C_{\varphi },C_{\psi})$ with $\tau _{\varphi }\neq 0$ is a Bertrand pair if and only if
\begin{equation}\label{3.4}
c\kappa _{\varphi }+ac\tau _{\varphi }=1,\text{ } 
\end{equation}%
where $a=\cot \theta $ and $c=$constant.
\end{theorem}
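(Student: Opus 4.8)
The plan is to mirror the classical argument for the Bertrand characterization, adapted to the modified orthogonal frame. The core idea is that the relation $\psi(s) = \varphi(s) + \delta(s) N_\varphi(s)$ with $\delta = c/\kappa_\varphi$ (from Theorem \ref{thm1}) already pins down the tangent of $\psi$, and the Bertrand condition $N_\varphi \parallel N_\psi$ forces both $T_\psi$ and $B_\psi$ to lie in the plane spanned by $T_\varphi$ and $B_\varphi$. Writing $\theta$ for the (constant, by Lemma \ref{lem1}) angle between $T_\varphi$ and $T_\psi$, one gets $T_\psi = \cos\theta\, \frac{1}{\kappa_\varphi}\cdot(\text{something})$ — more precisely, since the modified frame vectors $N,B$ have length $\kappa$ rather than $1$, I would first pass to the unit vectors $\mathbf{t}_\varphi, \mathbf{n}_\varphi, \mathbf{b}_\varphi$ via $T_\varphi=\mathbf{t}_\varphi$, $N_\varphi = \kappa_\varphi \mathbf{n}_\varphi$, $B_\varphi = \kappa_\varphi \mathbf{b}_\varphi$ from (\ref{h2.2}), so that the resolution $\mathbf{t}_\psi = \cos\theta\,\mathbf{t}_\varphi + \sin\theta\,\mathbf{b}_\varphi$ is clean.

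First I would substitute $\delta = c/\kappa_\varphi$ into (\ref{3.1}). Using (\ref{3.2}), the $N_\varphi$-component vanishes identically, leaving
\[
\psi'(s) = \bigl(1 - c\kappa_\varphi\bigr)T_\varphi + c\,\frac{\tau_\varphi}{\kappa_\varphi}\,B_\varphi = \bigl(1-c\kappa_\varphi\bigr)\mathbf{t}_\varphi + c\,\tau_\varphi\,\mathbf{b}_\varphi .
\]
Then $\psi' = \frac{ds^\ast}{ds}\,\mathbf{t}_\psi$, and taking the inner product of this expression with $\mathbf{t}_\varphi$ and with $\mathbf{b}_\varphi$ gives, after dividing,
\[
\cot\theta = \frac{\langle \psi',\mathbf{t}_\varphi\rangle}{\langle \psi',\mathbf{b}_\varphi\rangle} = \frac{1 - c\kappa_\varphi}{c\,\tau_\varphi}.
\]
Setting $a = \cot\theta$ and rearranging yields $a c\,\tau_\varphi = 1 - c\kappa_\varphi$, i.e. $c\kappa_\varphi + ac\tau_\varphi = 1$, which is (\ref{3.4}). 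This handles the forward direction; along the way I should note $\theta$ is constant by Lemma \ref{lem1}, so $a$ is a genuine constant, and $c$ is the constant from Theorem \ref{thm1}.

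For the converse, I would assume (\ref{3.4}) holds with $a,c$ constants and \emph{define} $\psi(s) = \varphi(s) + \frac{c}{\kappa_\varphi(s)}N_\varphi(s)$. Differentiating exactly as above, the $N_\varphi$-component of $\psi'$ vanishes automatically (the computation leading to (\ref{3.2})–(\ref{3.3}) is reversible), so $\psi' \in \operatorname{span}\{\mathbf{t}_\varphi,\mathbf{b}_\varphi\}$, hence $\mathbf{t}_\psi \perp \mathbf{n}_\varphi$. Then $N_\psi = \kappa_\psi\mathbf{n}_\psi$ is obtained by differentiating $\mathbf{t}_\psi$; since $\mathbf{t}_\psi$ is a combination of $\mathbf{t}_\varphi,\mathbf{b}_\varphi$ with coefficients that — precisely because of (\ref{3.4}) — are proportional to $(-\sin\theta,\cos\theta)$ with $\theta$ constant, its derivative is proportional to $\kappa_\varphi\mathbf{n}_\varphi$ (the $\mathbf{t}_\varphi$ and $\mathbf{b}_\varphi$ derivatives each contribute only $\mathbf{n}_\varphi$ terms via (\ref{h2.1}), and the coefficient derivatives vanish). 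Therefore $N_\psi \parallel N_\varphi$, so $(\varphi,\psi)$ is a Bertrand pair by Definition \ref{def1}.

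The main obstacle is the bookkeeping in the converse: one must verify that condition (\ref{3.4}) is exactly what makes the ratio $(1-c\kappa_\varphi):c\tau_\varphi$ constant along the curve, so that $\mathbf{t}_\psi$ has the form $\cos\theta\,\mathbf{t}_\varphi+\sin\theta\,\mathbf{b}_\varphi$ with fixed $\theta$; without that, differentiating $\mathbf{t}_\psi$ would produce stray $\mathbf{t}_\varphi,\mathbf{b}_\varphi$ components and the normals would fail to be parallel. I also need to keep careful track of the sign ambiguity ($\pm$) coming from the two possible orientations of $\mathbf{n}_\psi$ relative to $\mathbf{n}_\varphi$, and to record that $\tau_\varphi\neq 0$ is used to divide by $c\tau_\varphi$ when solving for $\cot\theta$. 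Normalizing $\psi$ to unit speed (dividing by $ds^\ast/ds = \|\psi'\|$) is routine and does not affect the parallelism of normals, so I would mention it only briefly.
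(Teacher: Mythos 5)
Your proposal is correct and follows essentially the same route as the paper: the forward direction resolves $\psi'$ in the plane of $T_{\varphi}$ and $B_{\varphi}$ using $\delta=c/\kappa_{\varphi}$ and compares with the constant-angle decomposition of $T_{\psi}$, and the converse defines the mate, observes that $T_{\psi}$ has constant coefficients in $\{T_{\varphi},B_{\varphi}/\kappa_{\varphi}\}$ because of (\ref{3.4}), and differentiates to get $N_{\psi}\parallel N_{\varphi}$. Your choice to pass to the unit Frenet vectors rather than differentiate the modified frame vectors directly is only a cosmetic difference (and your parenthetical ``proportional to $(-\sin\theta,\cos\theta)$'' should read $(\cos\theta,\sin\theta)$, which does not affect the argument).
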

\begin{proof}
Suppose $(C_{\varphi }:\varphi,C_{\psi}:\psi)$ be a Bertrand pair with $\tau_{\varphi}\neq0$. Let the modified orthogonal frames of $\varphi(s)$ and $\psi(s)$ are given by
\begin{equation*}
\left \{ T_{\varphi }={\bf t}_{\varphi },N_{\varphi }=\kappa _{\varphi }{\bf n}_{\varphi
},B_{\varphi }=\kappa _{\varphi }{\bf b}_{\varphi }\right \} ,\left \{ T_{\psi
}={\bf t}_{\psi },N_{\psi }=\kappa _{\psi }{\bf n}_{\psi },B_{\psi }=\kappa _{\psi
}{\bf b}_{\psi }\right \},
\end{equation*}%
respectively.
From Lemma \ref{lem1}, we know that the angle $\theta $ between $T_{\varphi }$
and $T_{\psi }$ is constant. Thus, we may write 
\begin{equation}\label{3.5}
T_{\psi }=\cos \theta T_{\varphi }+\frac{\sin \theta }{\kappa _{\varphi }}%
B_{\varphi }.  
\end{equation}%
Using (\ref{3.2}) and (\ref{3.3}) in (\ref{3.1}), we get 
\begin{equation}\label{3.6}
\psi^\prime(s) =\left( 1-c\kappa _{\varphi }\right) T_{\varphi
}+\frac{c\tau _{\varphi }}{\kappa _{\varphi }}B_{\varphi }. 
\end{equation}%
From (\ref{3.5}) and (\ref{3.6}), we obtain%
\begin{equation*}
\frac{1-c\kappa _{\varphi }}{\cos \theta }=\frac{\frac{c\tau _{\varphi }}{%
\kappa _{\varphi }}}{\frac{\sin \theta }{\kappa _{\varphi }}}
\end{equation*}%
or 
\begin{equation}\label{3.7}
c\kappa _{\varphi }+ac\tau _{\varphi }=1. 
\end{equation}
Conversly, suppose $C_{\varphi}$ be a curve satisfying $c\kappa _{\varphi }+ac\tau
_{\varphi }=1$ and $\tau _{\varphi }\neq 0$. Define another curve $C_{\psi}$ as:
\begin{equation*}
\psi (s)=\varphi (s)+\delta (s)N_{\varphi }(s).
\end{equation*}%
We shall prove that $C_{\varphi }$ and $C_{\psi }$ are Bertrand mates. From (\ref{3.6}), we know
\begin{equation}\label{3.8}
\psi ^{\prime }(s)=\left( 1-c\kappa _{\varphi }\right) T_{\varphi }+\frac{%
c\tau _{\varphi }}{\kappa _{\varphi }}B_{\varphi }.  
\end{equation}%
Using the given condition in (\ref{3.8}), we obtain
\begin{equation*}
\psi ^{\prime }(s)=ac\tau _{\varphi }T_{\varphi }+\frac{c\tau _{\varphi }}{%
\kappa _{\varphi }}B_{\varphi }=c\left( aT_{\varphi }+\frac{B_{\varphi }}{\kappa
_{\varphi }}\right) \tau _{\varphi }\text{.}
\end{equation*}%
 Hence the tangent vector to $%
C_{\psi }$ is given by%
\begin{equation}\label{3.9}
T_{\psi }=\frac{\psi ^{\prime }(s)}{\left \Vert \psi ^{\prime }(s)\right
\Vert }=\frac{c\tau _{\varphi }}{\left \vert c\tau _{\varphi }\right \vert }%
\frac{\left( aT_{\varphi }+\frac{B_{\varphi }}{\kappa _{\varphi }}\right) }{%
\sqrt{a^{2}+1}}=\pm \frac{aT_{\varphi }+\frac{B_{\varphi }}{\kappa _{\varphi }}}{%
\sqrt{a^{2}+1}};a=const. 
\end{equation}%
Differentiation (\ref{3.9}) with respect to $s^\ast$, we get%
\begin{equation}
\frac{dT_{\psi }}{ds^{\ast }}=\pm \frac{1}{\sqrt{a^{2}+1}}\left \{
aN_{\varphi }-\frac{\kappa _{\varphi }^{\prime }}{\kappa _{\varphi }^{2}}%
B_{\varphi }+\frac{1}{\kappa _{\varphi }}\left( -\tau _{\varphi }N_{\varphi }+%
\frac{\kappa _{\varphi }^{\prime }}{\kappa _{\varphi }}B_{\varphi }\right)
\right \} \frac{ds}{ds^{\ast }}.  \notag
\end{equation}%
This implies that
\begin{equation*}
N_{\psi }=\pm \frac{a-\frac{\tau _{\varphi }}{\kappa _{\varphi }}}{\sqrt{%
a^{2}+1}}\frac{ds}{ds^{\ast }}N_{\varphi }.  
\end{equation*}%
or \ 
\begin{equation}\label{3.10}
\frac{N_{\psi }}{\kappa _{\psi }}=\pm \frac{a-\frac{\tau _{\varphi }}{%
\kappa _{\varphi }}}{\sqrt{a^{2}+1}}\frac{ds}{ds^{\ast }}\frac{\kappa
_{\varphi }}{\kappa _{\psi }}\frac{N_{\varphi }}{\kappa _{\varphi }}.
\end{equation}%
Since $\frac{N_{\psi }}{\kappa _{\psi }} \textrm{ and }\pm \frac{N_{\varphi }}{\kappa
_{\varphi }}$ are unit vectors, from (\ref{3.10}), we have%
\begin{equation*}
\frac{ds^{\ast }}{ds}=\frac{\kappa _{\varphi }}{\kappa _{\psi }}\frac{a-%
\frac{\tau _{\varphi }}{\kappa _{\varphi }}}{\sqrt{a^{2}+1}}
\end{equation*}%
and \ 
\begin{equation*}
\frac{N_{\psi }}{\kappa _{\psi }}=\pm \frac{N_{\varphi }}{\kappa _{\varphi }}%
.
\end{equation*}
This completes the proof.
\end{proof}
\begin{theorem}
Let $(C_{\varphi },C_{\psi })\ $be a Bertrand mate in Euclidean 3-space $%
E^{3}$ according to the modified orthogonal frame, then the following identities hold:
\end{theorem}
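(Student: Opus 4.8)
The plan is to read off the curvature $\kappa_\psi$ and the torsion $\tau_\psi$ of the mate curve from the relations already assembled in the proof of the preceding theorem, and then to rewrite the resulting formulas using the linear relation $c\kappa_\varphi+ac\tau_\varphi=1$ of (\ref{3.4}). The data I would start from are: the tangent relation $T_\psi=\pm\bigl(aT_\varphi+\frac{1}{\kappa_\varphi}B_\varphi\bigr)/\sqrt{a^2+1}$ from (\ref{3.9}), the parallelism $N_\psi/\kappa_\psi=\pm N_\varphi/\kappa_\varphi$ from (\ref{3.10}), the reparametrization relation $\frac{ds^\ast}{ds}=\frac{\kappa_\varphi}{\kappa_\psi}\cdot\frac{a-\tau_\varphi/\kappa_\varphi}{\sqrt{a^2+1}}$, the constancy of the angle $\theta$ (Lemma \ref{lem1}), and the constancy of the distance $\|\psi-\varphi\|=|c|$ (Theorem \ref{thm1}).

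First I would differentiate $T_\psi$ with respect to the arc length $s$ of $\varphi$, substitute the modified orthogonal frame equations (\ref{h2.3}) for $\varphi$, and observe — exactly as in the computation leading to (\ref{3.10}), where the two $\frac{\kappa_\varphi^{\prime}}{\kappa_\varphi}B_\varphi$ contributions cancel — that $N_\psi\,\frac{ds^\ast}{ds}$ is a scalar multiple of $N_\varphi$. Taking the inner product of this identity with itself and using $\langle N_\varphi,N_\varphi\rangle=\kappa_\varphi^2$ and $\langle N_\psi,N_\psi\rangle=\kappa_\psi^2$ from (\ref{h2.4}) yields $\kappa_\psi\,\frac{ds^\ast}{ds}$ up to sign; dividing this by the reparametrization relation above then isolates $\kappa_\psi$ and $\frac{ds^\ast}{ds}$ separately, each as an explicit expression in $\kappa_\varphi,\tau_\varphi$ and the constants $a,c$. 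Next I would produce $\tau_\psi$ either by forming $B_\psi=T_\psi\times N_\psi$ and differentiating, or by differentiating $N_\psi$ a second time and projecting the result onto the $B_\varphi$ direction; in either route the frame equations (\ref{h2.3}) are applied and the same cancellations tidy up the expression. Finally, substituting $1-c\kappa_\varphi=ac\tau_\varphi$ from (\ref{3.4}) converts these formulas into the asserted identities, among them the classical product relation of the form $\tau_\varphi\tau_\psi=\mathrm{const}$, the constant being expressible through $\theta$ (equivalently $a$) and $c$.

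I expect the main obstacle to be bookkeeping rather than anything conceptual. One must carry the reparametrization factor $\frac{ds^\ast}{ds}$ through every differentiation, and — because in the modified orthogonal frame the vectors $N$ and $B$ have length $\kappa$ rather than $1$ — one must pass carefully between $N_\psi$ and the unit principal normal $\mathbf n_\psi$, and likewise between $B_\psi$ and $\mathbf b_\psi$, in order to extract $\kappa_\psi$ and $\tau_\psi$ correctly. The sign ambiguities ($\pm$) arising from the orientation of the mate's frame should be fixed once at the outset and tracked throughout. Once $\kappa_\psi$ and $\frac{ds^\ast}{ds}$ are in hand, the remaining manipulations are symmetric in $\varphi$ and $\psi$ thanks to the involutive nature of the Bertrand correspondence, which also provides a useful consistency check on the final identities.
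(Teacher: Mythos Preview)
Your plan is sound and would yield the identities, but it is a genuinely different and considerably longer route than the paper's. The paper does not compute $\kappa_\psi$ or $\tau_\psi$ by differentiating the frame of $\psi$ at all. Instead it observes that from (\ref{3.5}) and (\ref{3.6}) one reads off
\[
\frac{ds^\ast}{ds}\cos\theta = 1-c\kappa_\varphi,\qquad
\frac{ds^\ast}{ds}\sin\theta = c\tau_\varphi,
\]
and then \emph{uses the involutive nature of the Bertrand correspondence as the main tool}, not merely as a consistency check: swapping the roles of $\varphi$ and $\psi$ (so $\delta\mapsto -\delta$, $\theta\mapsto -\theta$) gives immediately
\[
\frac{ds}{ds^\ast}\cos\theta = 1+c\kappa_\psi,\qquad
\frac{ds}{ds^\ast}\sin\theta = c\tau_\psi.
\]
Multiplying the two cosine equations eliminates the reparametrization factor and gives $\cos^2\theta=(1-c\kappa_\varphi)(1+c\kappa_\psi)$, whence the first identity; multiplying the two sine equations gives $\sin^2\theta=c^2\tau_\varphi\tau_\psi$, which is the second. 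No second differentiation, no explicit $B_\psi$, no tracking of the $\kappa'/\kappa$ terms is needed.

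Your approach buys an explicit formula for $\kappa_\psi$ and $\tau_\psi$ individually in terms of the data of $\varphi$ (which the paper never writes down), at the price of the bookkeeping you correctly anticipate. The paper's approach buys brevity by recognizing that only the \emph{products} $\frac{ds^\ast}{ds}\cdot\frac{ds}{ds^\ast}=1$ matter for the stated identities, so the symmetry short-circuits the whole computation.
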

\begin{eqnarray}\label{id1}\left\{
\begin{array}{c}
\kappa _{\varphi } =\frac{c\kappa _{\psi }+\sin ^{2}\theta }{c\left(
1+c\kappa _{\psi }\right) } \vspace{.2cm}\\
\tau _{\varphi }\tau _{\psi } =\left( \frac{\sin \theta }{c}\right) ^{2}>0.
\end{array}
\right.
\end{eqnarray}
\begin{proof}
From (\ref{3.5}) and (\ref{3.6}), we can write respectively, 
\begin{equation}\label{3.11}
\left \{ 
\begin{array}{c}
\dfrac{1-c\kappa _{\varphi }}{\cos \theta }=\dfrac{ds^{\ast }}{ds}\Rightarrow 
\dfrac{ds^{\ast }}{ds}\cos \theta =1-c\kappa _{\varphi } \\ 
\dfrac{c\dfrac{\tau _{\varphi }}{\kappa _{\varphi }}}{\frac{\sin \theta }{%
\kappa _{\varphi }}}=\dfrac{ds^{\ast }}{ds}\Rightarrow \dfrac{ds^{\ast }}{ds}%
\sin \theta =c\tau _{\varphi }.%
\end{array}%
\right. 
\end{equation}%
Interchanging the roles of $\varphi $ and $\psi $, thus $(\psi ,\varphi )$ is also a Bertrand mate and in this case, $\delta $ and $\theta $ are replaced
with $-\delta $ and $-\theta ,$ respectively$.$ Hence we can write as%
\begin{equation}\label{3.12}
\left \{ 
\begin{array}{c}
\dfrac{ds}{ds^{\ast }}\cos \theta =1+c\kappa _{\psi } \\ 
\dfrac{ds}{ds^{\ast }}\sin \theta =c\tau _{\psi }.%
\end{array}%
\right.  
\end{equation}%
By multiplying the first parts of (\ref{3.11}) and (\ref{3.12}), we
get
\begin{equation*}
\cos ^{2}\theta =\left( 1-c\kappa _{\varphi }\right) \left( 1+c\kappa _{\psi
}\right)
\end{equation*}%
or 
\begin{equation}
\kappa _{\varphi }=\frac{c\kappa _{\psi }+\sin ^{2}\theta }{c\left(
1+c\kappa _{\psi }\right) }.\text{ } 
\end{equation}%
Now multiplying the second parts of (\ref{3.11}) and (\ref{3.12}), we get%
\begin{equation}
\sin ^{2}\theta =c^{2}\left( \tau _{\varphi }\tau _{\psi }\right) . 
\end{equation}%
From above equation, we see that 
\begin{equation*}
\tau _{\varphi }\tau _{\psi }>0.
\end{equation*}
\end{proof}
\begin{example}
Let $\varphi$ be a curve parameterized by
\begin{equation}\label{x1}
\varphi(s)=\left(\frac{1}{2\sqrt{2}}\sin 2s, \frac{1}{2\sqrt{2}}\cos 2s,\frac{s}{\sqrt{2}}\right).
\end{equation}
The modified orthogonal frame vectors for (\ref{x1}) are given by
\begin{eqnarray}
\label{x7}T_\varphi &=&\left(\frac{1}{\sqrt{2}}\cos 2s, -\frac{1}{\sqrt{2}}\sin 2s, \frac{1}{\sqrt{2}}\right).\\
\label{x8}N_\varphi &=&\left(-\sqrt{2} \sin 2s, -\sqrt{2}\cos 2s,0\right).\\
\label{x9}B_\varphi &=&\left(-2 \sin 2s,-2 \cos 2s, 0\right).
\end{eqnarray}
From above, we see that $\varphi$ is parameterized by arc length. Also, the curvature and torsion is given by:
$$\kappa_\varphi =\sqrt{2},\quad \tau_\varphi =-\sqrt{2}.$$
Now for $c=\frac{1}{\sqrt{2}}$, with the help of (\ref{m1}), we can construct a  curve  $\psi$ parameterized as
\begin{equation}\label{x3}
\psi(s)=\left(-\frac{1}{2\sqrt{2}}\sin 2s, -\frac{1}{2\sqrt{2}}\cos 2s, \frac{s}{\sqrt{2}}\right).
\end{equation}
The modified orthogonal frame vectors of (\ref{x3}) are found as
\begin{eqnarray}\label{x4}
T_\psi &=&\left(-\frac{1}{\sqrt{2}}\cos 2s, \frac{1}{\sqrt{2}}\sin 2s,\frac{1}{\sqrt{2}}\right).\\
\label{x5} N_\psi &=&\left(\sqrt{2}\sin 2s,\sqrt{2}\cos 2s,0\right).\\
\label{x6} B_\psi &=&\left(-\cos 2s,\sin 2s,-1\right).
\end{eqnarray}
From (\ref{x8}) and (\ref{x5}), we see that $(\varphi, \psi)$ is a Bertrand pair. The curvature and the torsion of $\psi(s)$ are given by $$\kappa_\psi =\sqrt{2},\quad \tau_\psi =-\sqrt{2}.$$ From (\ref{x7}) and (\ref{x4}), we have $\theta=\frac{\pi}{2}$. Hence (\ref{3.4}) is straightforward. Again substituting the required quantities, the identities in (\ref{id1}) are direct verifications.
\begin{figure}[h]
\centering
  \includegraphics[width=3cm,height=5cm]{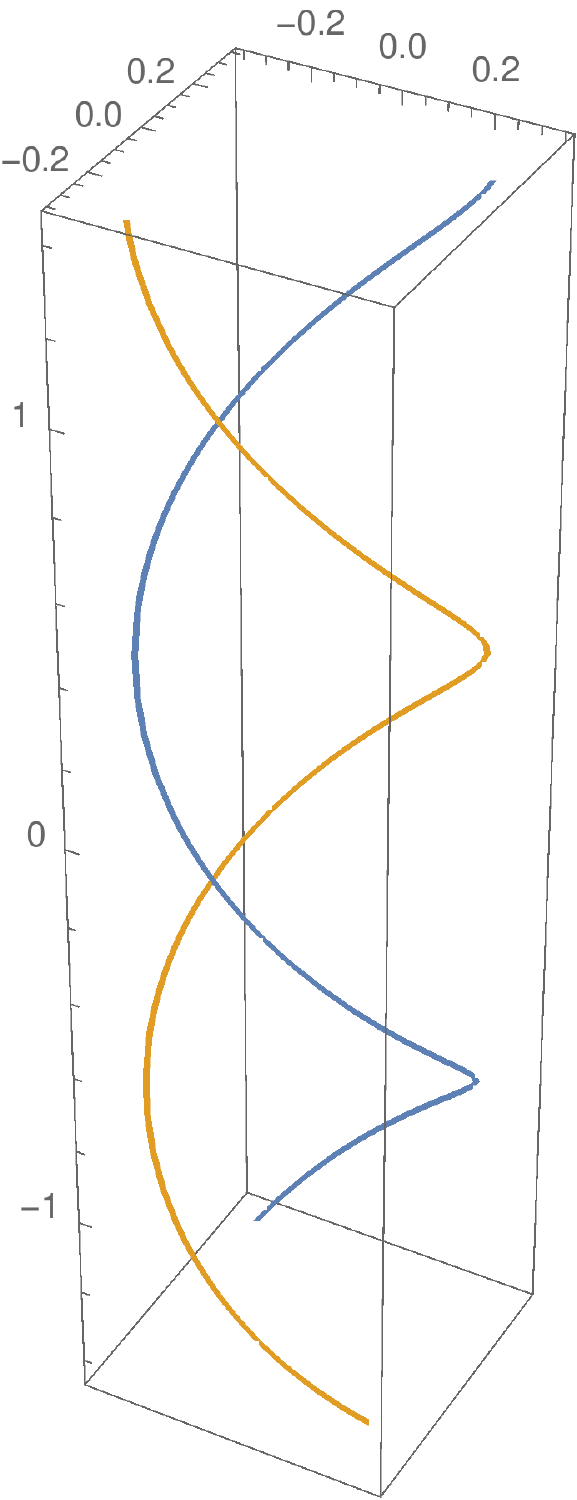}
  \caption{Bertrand pair $(\varphi,\psi)$}
\end{figure}
\end{example}
\begin{theorem}\label{thm4}
Suppose there exists a one-one relation between the points of
the curves $C_{\varphi }$ and $C_{\psi }$, such
that at the corresponding points $P_{\varphi }$ on $C_{\varphi }$ and $P_{\psi }$
on $C_{\psi }:$\\
\textbf{(a)} $\kappa _{\varphi }$ is constant.\\
\textbf{(b)} $\tau _{\psi }$  is constant.\\
\textbf{(c)} $T_{\varphi }$ is
parallel to $T_{\psi }$, \newline
then
the curve $C$ generated by $P$ that divides the segment $P_{\varphi }P_{\psi
}$ in ratio $h:1$ is a Bertrand curve.
\end{theorem}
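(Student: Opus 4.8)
The plan is to parametrize both curves by one common parameter, observe that the generated curve $C$ essentially carries the Frenet apparatus of $C_{\varphi}$, and then show that the curvature and torsion of $C$ satisfy a linear relation with \emph{constant} coefficients; the characterization $(\ref{3.4})$ (more precisely, its converse half) then identifies $C$ as a Bertrand curve. Throughout I work with curvature and torsion, which are the same for the Frenet--Serret and the modified orthogonal frames, so it is immaterial which frame the computations are written in.

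First I would let $s$ be the arc length of $C_{\varphi}$ and, using the given one-one correspondence $P_{\varphi}\leftrightarrow P_{\psi}$, regard $C_{\psi}$ as a curve in the same parameter $s$ with its own arc length $s^{\ast}=s^{\ast}(s)$; put $\lambda:=ds^{\ast}/ds$. Hypothesis \textbf{(c)} lets me orient $C_{\psi}$ so that $T_{\psi}=T_{\varphi}$ at corresponding points, and then $\lambda>0$ and $\psi^{\prime}(s)=\lambda\,T_{\varphi}$. Differentiating $T_{\psi}=T_{\varphi}$ with respect to $s$ and using the Frenet--Serret equations $(\ref{h2.1})$ forces the unit principal normals to agree, $\mathbf n_{\psi}=\mathbf n_{\varphi}$, hence also $\mathbf b_{\psi}=\mathbf b_{\varphi}$, together with the scalar identities $\kappa_{\varphi}=\lambda\kappa_{\psi}$ and $\tau_{\varphi}=\lambda\tau_{\psi}$; in particular $\lambda=\kappa_{\varphi}/\kappa_{\psi}=\tau_{\varphi}/\tau_{\psi}$ and the two curves share a single moving frame along corresponding points.

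Next, the point $P$ dividing the segment $P_{\varphi}P_{\psi}$ in ratio $h:1$ traces
\begin{equation*}
\gamma(s)=\frac{1}{h+1}\,\varphi(s)+\frac{h}{h+1}\,\psi(s),
\end{equation*}
so that $\gamma^{\prime}(s)=w\,T_{\varphi}$ with $w:=\dfrac{1+h\lambda}{h+1}$. Assuming $w>0$ (so that $C$ is regular), we get $T_{\gamma}=T_{\varphi}$ and $ds_{\gamma}/ds=w$; hence $C$ again carries the frame $\{T_{\varphi},\mathbf n_{\varphi},\mathbf b_{\varphi}\}$, and applying $\dfrac{d}{ds_{\gamma}}=\dfrac{1}{w}\dfrac{d}{ds}$ to $T_{\varphi}$ and to $\mathbf b_{\varphi}$ yields $\kappa_{\gamma}=\kappa_{\varphi}/w$ and $\tau_{\gamma}=\tau_{\varphi}/w$. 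Substituting $w$ and $\tau_{\varphi}=\lambda\tau_{\psi}$,
\begin{equation*}
\kappa_{\gamma}=\frac{(h+1)\,\kappa_{\varphi}}{1+h\lambda},\qquad
\tau_{\gamma}=\frac{(h+1)\,\lambda\,\tau_{\psi}}{1+h\lambda}.
\end{equation*}
Now hypotheses \textbf{(a)} and \textbf{(b)} enter: because $\kappa_{\varphi}$ and $\tau_{\psi}$ are constants, so are $c:=\dfrac{1}{(h+1)\kappa_{\varphi}}$ and $a:=\dfrac{h\kappa_{\varphi}}{\tau_{\psi}}$, and a direct computation gives
\begin{equation*}
c\kappa_{\gamma}+ac\,\tau_{\gamma}=\frac{(h+1)\bigl(c\kappa_{\varphi}+ac\lambda\tau_{\psi}\bigr)}{1+h\lambda}=1,
\end{equation*}
with $c\neq 0$ and $\tau_{\gamma}\neq 0$. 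By the converse direction of the theorem that contains $(\ref{3.4})$, a curve whose curvature and torsion satisfy such a relation admits a Bertrand mate; hence $C$ is a Bertrand curve.

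The step I expect to be the real obstacle is the second one: extracting, from the mere parallelism of $T_{\varphi}$ and $T_{\psi}$ at corresponding points, the sharp conclusion that $\kappa_{\varphi}=\lambda\kappa_{\psi}$, $\tau_{\varphi}=\lambda\tau_{\psi}$ and that the normals and binormals coincide, while keeping careful track of the orientation chosen on $C_{\psi}$ and of the sign of $\lambda$. Once that is in hand the remainder is routine bookkeeping, and it is precisely there --- in making the coefficients $c$ and $a$ of the final linear relation \emph{constant} --- that both constancy assumptions \textbf{(a)} on $\kappa_{\varphi}$ and \textbf{(b)} on $\tau_{\psi}$ are indispensable.
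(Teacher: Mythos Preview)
Your proposal is correct and follows essentially the same route as the paper: both arguments use hypothesis \textbf{(c)} to identify the Frenet frames of $C_\varphi$, $C_\psi$ and $C$ at corresponding points, extract the arc-length ratios, and conclude that $\kappa$ and $\tau$ of $C$ satisfy a linear relation whose coefficients are constant precisely because of \textbf{(a)} and \textbf{(b)}. The only cosmetic differences are that the paper parametrizes by the arc length of the \emph{generated} curve $C$ (obtaining $\tfrac{h}{\kappa_\varphi}\kappa+\tfrac{1-h}{\tau_\psi}\tau=1$ directly from $h\tfrac{ds_\varphi}{ds}+(1-h)\tfrac{ds_\psi}{ds}=1$), writes the affine combination as $h\alpha_\varphi+(1-h)\alpha_\psi$, and carries the computation in the modified orthogonal frame throughout.
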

\begin{proof}
Let  $\alpha(s),$ $\alpha _{\varphi }(s),$ $\alpha _{\psi }(s)$ be the coordinate vectors at
the points $P$, $P_{\varphi }$, $P_{\psi }$ on the curves 
$C$, $C_{\varphi }$, $C_{\psi }$ respectively. Then from a convex combination of points $P_{\varphi
} $ and $P_{\psi }$ , the equation of point $P$ is 
\begin{equation}\label{3.15}
\alpha (s)=h\alpha _{\varphi }(s)+(1-h)\alpha _{\psi }(s),\, \,h\, \in R,h\,
\in \left[ 0,1\right] .  \
\end{equation}%
Differentiating (\ref{3.15}) with respect to $s$ and using the hypothesis, i.e., $T_{\varphi
}=T_{\psi }\,,\,$we find 
\begin{equation}\label{3.16}
\, \, \,Tds=hT_{\varphi }ds_{\varphi }+(1-h)T_{\psi }ds_{\psi }=\left[
hds_{\varphi }+(1-h)ds_{\psi }\right] T_{\varphi } 
\end{equation}%
\begin{equation}\label{3.17}
T=T_{\varphi }=T_{\psi }  
\end{equation}%
\begin{equation}\label{3.18}
h\frac{ds_{\varphi }}{ds}+(1-h)\frac{ds_{\psi }}{ds}=1.
\end{equation}%
Similary, by differentiating (\ref{3.17}), we obtain%
\begin{equation}\label{3.19}
Nds=N_{\varphi }ds_{\varphi }=N_{\psi }ds_{\psi },\text{ }  
\end{equation}%
\begin{equation}\label{3.20}
\kappa ds=\kappa _{\varphi }ds_{\varphi }=\kappa _{\psi }ds_{\psi }, 
\end{equation}%
and%
\begin{equation}\label{3.21}
\frac{ds_{\varphi }}{ds}=\frac{\kappa }{\kappa _{\varphi }}. 
\end{equation}%
\begin{equation}\label{3.22}
\frac{N}{\kappa }=\frac{N_{\varphi }}{\kappa _{\varphi }}=\frac{N_{\psi }}{%
\kappa _{\psi }}.  
\end{equation}%
From the vector product of (\ref{3.17}) and (\ref{3.22}), we have%
\begin{equation}\label{3.23}
\frac{B}{\kappa }=\frac{B_{\varphi }}{\kappa _{\varphi }}=\frac{B_{\psi }}{%
\kappa _{\psi }}.  
\end{equation}%
$\allowbreak $Differentiating (\ref{3.23}) and using
(\ref{3.17}),(\ref{3.22}), we get
\begin{equation}\label{re1}
\tau \frac{N}{\kappa }=\tau _{\psi }\frac{N_{\psi }}{\kappa _{\psi }}%
\frac{ds_{\psi }}{ds}
\end{equation}%
and 
\begin{equation}\label{3.24}
\frac{ds_{\psi }}{ds}=\frac{\tau }{\tau _{\psi }}. 
\end{equation}%
By inserting (\ref{3.21}) and (\ref{3.24}) in (\ref{3.18}), we get 
\begin{equation*}
\left( \frac{h}{\kappa _{\varphi }}\right) \kappa +\left( \frac{1-h}{\tau
_{\psi }}\right) \tau =1;\kappa _{\varphi }\neq 0,\tau _{\psi }\neq 0,
\end{equation*}%
which is the desired result, since $h,\kappa _{\varphi },\tau _{\psi }$ are
constant. 
\end{proof}
\begin{theorem}
If the condition \textbf{(c)} in Theorem \ref{thm4} is modified so that at the corresponding
points $P_{\varphi }$ and $\ P_{\psi }$ the binormals $B_{\varphi }$ and $%
B_{\psi }$ are parallel, then the curve $C$ is a Bertrand curve.
\end{theorem}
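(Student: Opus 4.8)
The plan is to mirror the proof of Theorem \ref{thm4} almost verbatim, replacing the hypothesis that the tangents are parallel with the hypothesis that the binormals are parallel, and then tracking how the differentiation chain changes. First I would set up the same convex combination $\alpha(s)=h\alpha_\varphi(s)+(1-h)\alpha_\psi(s)$ for the point $P$ dividing $P_\varphi P_\psi$ in ratio $h:1$, with $h,\kappa_\varphi,\tau_\psi$ constant as before. The key structural fact to establish is the analogue of the chain (\ref{3.17})--(\ref{3.23}): from $B_\varphi \parallel B_\psi$ together with (\ref{h2.3}) (or the version in (\ref{h3.1})), differentiating $B/\kappa$ gives $-\tau N/\kappa$, so that $B_\varphi\parallel B_\psi$ forces $N_\varphi\parallel N_\psi$ and hence $B/\kappa = B_\varphi/\kappa_\varphi = B_\psi/\kappa_\psi$ and $N/\kappa = N_\varphi/\kappa_\varphi = N_\psi/\kappa_\psi$, and then by the cross product $T=T_\varphi=T_\psi$. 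In other words, the parallelism of binormals propagates to parallelism of normals and tangents, so one recovers essentially the same frame identities as in Theorem \ref{thm4}.

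Next I would redo the arc-length bookkeeping. From $N/\kappa = N_\varphi/\kappa_\varphi = N_\psi/\kappa_\psi$ and $\kappa\,ds = \kappa_\varphi\,ds_\varphi = \kappa_\psi\,ds_\psi$ we again get $\frac{ds_\varphi}{ds} = \frac{\kappa}{\kappa_\varphi}$ and $\frac{ds_\psi}{ds} = \frac{\tau}{\tau_\psi}$ (the latter coming from differentiating $B/\kappa$ and matching the $N_\psi/\kappa_\psi$ components, exactly as in (\ref{re1})--(\ref{3.24})). Differentiating $T=hT_\varphi + (1-h)T_\psi$ style relation along $\alpha$, or rather starting from $T\,ds = hT_\varphi\,ds_\varphi + (1-h)T_\psi\,ds_\psi = [h\,ds_\varphi + (1-h)\,ds_\psi]T_\varphi$ since the tangents coincide, yields $h\frac{ds_\varphi}{ds} + (1-h)\frac{ds_\psi}{ds} = 1$. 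Substituting the two ratios gives $\left(\frac{h}{\kappa_\varphi}\right)\kappa + \left(\frac{1-h}{\tau_\psi}\right)\tau = 1$, a linear relation $a\kappa + b\tau = 1$ with constant coefficients, which by Theorem 4.3 (the relation (\ref{3.4})) characterizes a Bertrand curve. Hence $C$ is a Bertrand curve.

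I expect the main obstacle — really the only nontrivial point — to be justifying the claim that $B_\varphi \parallel B_\psi$ already implies $T_\varphi \parallel T_\psi$, i.e.\ that the hypothesis is in fact comparably strong to the tangent-parallel one, so that the rest of the argument transfers. This needs a short argument: differentiating $B_\varphi/\kappa_\varphi$ with respect to $s$ using the modified orthogonal frame equations gives a vector proportional to $N_\varphi/\kappa_\varphi$, and similarly for $\psi$; since $B_\varphi/\kappa_\varphi = \pm B_\psi/\kappa_\psi$ are unit vectors with parallel derivatives, the normalized normals must coincide up to sign, and then the binormal–normal cross product pins down the tangents. Once this is in place, the remaining computation is a line-by-line repetition of the proof of Theorem \ref{thm4} with the roles of the $B$- and $T$-identities swapped at the very first step, and with (\ref{3.24}) now playing the role that (\ref{3.21}) did before. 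One should also note the nondegeneracy hypotheses $\kappa_\varphi \neq 0$, $\tau_\psi \neq 0$ carry over unchanged so that the divisions are legitimate.
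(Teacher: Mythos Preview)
Your approach is correct and matches the paper's: the key step in both is to show that parallel binormals force $N_\varphi/\kappa_\varphi = \pm N_\psi/\kappa_\psi$ (by differentiating $B/\kappa$) and hence $T_\varphi = T_\psi$ via the cross product. The only difference is that once $T_\varphi \parallel T_\psi$ is established, the paper simply invokes Theorem~\ref{thm4} and stops, whereas you redo its arc-length bookkeeping; this is a harmless redundancy, not a different method.
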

\begin{proof}
Since $\dfrac{B_{\varphi }}{\kappa _{\varphi }}=\dfrac{B_{\psi }}{\kappa
_{\psi }}$, similar as in (\ref{re1}) and (\ref{3.24}), we obtain 
\begin{equation}\label{3.25}
\frac{N_{\varphi }}{\kappa _{\varphi }}=\frac{\tau _{\psi }}{\tau _{\varphi }}%
\frac{ds_{\psi }}{ds_{\varphi }}\frac{N_{\psi }}{\kappa _{\psi }}. 
\end{equation}
and 
\begin{equation}\label{3.26}
\frac{ds_{\psi }}{ds_{\varphi }}=\frac{\tau _{\varphi }}{\tau _{\psi }}. 
\end{equation}
Using (\ref{3.26}), we can rewrite (\ref{3.25}) as 
\begin{equation*}
\frac{N_{\varphi }}{\kappa _{\varphi }}= \frac{N_{\psi }}{\kappa _{\psi }}%
.
\end{equation*}%
Thus, we have 
\begin{equation*}
T_{\varphi }=\dfrac{N_{\varphi }}{\kappa _{\varphi }}\times \dfrac{B_{\varphi }}{%
\kappa _{\varphi }}=\dfrac{1}{\kappa _{\varphi }^{2}}\left( \dfrac{\kappa
_{\varphi }}{\kappa _{\psi }}N_{\psi }\times \dfrac{\kappa _{\varphi }}{%
\kappa _{\psi }}B_{\psi }\right) =\frac{N_{\psi }}{\kappa _{\psi }}%
\times \frac{B_{\psi }}{\kappa _{\psi }}=T_{\psi }.
\end{equation*}%
Therefore $T_{\varphi }$ is parallel to $T_{\psi }$, so that by Theorem \ref{thm4}, $C$
is a Bertrand curve .
\end{proof}
\begin{theorem}
If the condition \textbf{(c)} in Theorem \ref{thm4} is modified so that at the corresponding
points $P_{\varphi }$ and $\ P_{\psi }$ the tangent $T_{\varphi }$ at $%
P_{\varphi }$ is parallel to the binormal $B_{\psi }$ at $P_{\psi }$, then
the curve $C$ is a Bertrand curve.
\end{theorem}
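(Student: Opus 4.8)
The hypothesis to be modified is condition \textbf{(c)} of Theorem \ref{thm4}: instead of $T_{\varphi}\parallel T_{\psi}$, we now assume $T_{\varphi}\parallel B_{\psi}$ at corresponding points, while keeping (a) $\kappa_{\varphi}$ constant and (b) $\tau_{\psi}$ constant, and keeping the curve $C$ defined by the convex combination $\alpha(s)=h\alpha_{\varphi}(s)+(1-h)\alpha_{\psi}(s)$. The goal is again to produce a linear relation $a\kappa+b\tau=1$ between the curvature and torsion of $C$ with constant coefficients. The strategy mirrors the proof of Theorem \ref{thm4}: differentiate $\alpha$, extract how the modified orthogonal frame of $C$ relates to those of $C_{\varphi}$ and $C_{\psi}$, read off the two ratios $ds_{\varphi}/ds$ and $ds_{\psi}/ds$ in terms of $\kappa,\tau$, and substitute into the normalization $h\,ds_{\varphi}/ds+(1-h)\,ds_{\psi}/ds=1$ coming from the definition of $\alpha$.

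First I would differentiate $\alpha(s)=h\alpha_{\varphi}+(1-h)\alpha_{\psi}$ to get $T\,ds = h\,T_{\varphi}\,ds_{\varphi}+(1-h)\,T_{\psi}\,ds_{\psi}$. Since $T_{\varphi}\parallel B_{\psi}$, the two vectors $T_{\varphi}$ and $T_{\psi}$ span (generically) a plane, and the tangent $T$ of $C$ lies in that plane; write $T_{\varphi}=\pm B_{\psi}/\kappa_{\psi}$ so that the right-hand side becomes a combination of $T_{\psi}$ and $B_{\psi}$, i.e. of $T_{\psi}$ and $B_{\psi}$ only. Then I would differentiate $T$ along $C$ using the modified orthogonal frame equations \eqref{h2.3} for $\varphi$ and $\psi$; the $B_{\psi}$-derivative $B_{\psi}'=-\tau_{\psi}N_{\psi}+(\kappa_{\psi}'/\kappa_{\psi})B_{\psi}$ simplifies because $\kappa_{\psi}$-terms interact with $\kappa_{\varphi}$ being constant, and matching the $N$-components on both sides will yield $N/\kappa = \pm N_{\psi}/\kappa_{\psi}$ together with an expression for $ds_{\psi}/ds$. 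A second differentiation, tracking the $B$-component as in \eqref{re1}, will give $ds_{\varphi}/ds$ in terms of $\tau/\tau_{\varphi}$ (or $\kappa/\kappa_{\varphi}$) analogously to \eqref{3.21} and \eqref{3.24}. Substituting both ratios into \eqref{3.18} produces $\bigl(h/c_1\bigr)\kappa+\bigl((1-h)/c_2\bigr)\tau=1$ with $c_1,c_2$ built from the constants $\kappa_{\varphi},\tau_{\psi}$, which is the Bertrand condition for $C$.

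The main obstacle I anticipate is bookkeeping the frame transfer correctly when tangent and binormal are interchanged: unlike the case $T_{\varphi}\parallel T_{\psi}$, here the principal normal of $C$ need not be immediately proportional to $N_{\varphi}$ or $N_{\psi}$, so establishing the key proportionality $N/\kappa=\pm N_{\psi}/\kappa_{\psi}$ (and likewise involving $N_{\varphi}$) requires care about which structure equations to differentiate and in what order — essentially one must show the $T_{\varphi}$--$B_{\psi}$ parallelism is preserved infinitesimally, using that $\kappa_{\varphi}$ and $\tau_{\psi}$ are constant to kill the stray $\kappa'/\kappa$ terms. A cleaner route, which I would try first, is to reduce to the already-proved case: show that $T_{\varphi}\parallel B_{\psi}$ together with (a), (b) forces, via the structure equations, the relation $T_{\psi}\parallel B_{\varphi}$ as well (so $C_{\varphi}$ and $C_{\psi}$ play symmetric swapped roles), and that the curve generated from $C_{\psi}$ and $C_{\varphi}$ in the complementary ratio satisfies the hypotheses of the previous theorem; then Bertrand-ness of $C$ follows. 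If that shortcut stalls, I fall back on the direct computation above.
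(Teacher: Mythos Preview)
Your plan contains a genuine gap at its core. The normalization $h\,\dfrac{ds_{\varphi}}{ds}+(1-h)\,\dfrac{ds_{\psi}}{ds}=1$ does \emph{not} follow from the definition of $\alpha$ alone; in Theorem~\ref{thm4} it is obtained only because $T_{\varphi}=T_{\psi}$, so that $T\,ds=[h\,ds_{\varphi}+(1-h)\,ds_{\psi}]T_{\varphi}$ and one reads off both $T=T_{\varphi}$ and the scalar identity \eqref{3.18}. Here $T_{\varphi}\parallel B_{\psi}$ forces $T_{\varphi}\perp T_{\psi}$, and taking the norm of $T\,ds=h\,T_{\varphi}\,ds_{\varphi}+(1-h)\,T_{\psi}\,ds_{\psi}$ gives the Pythagorean relation $(ds)^{2}=h^{2}(ds_{\varphi})^{2}+(1-h)^{2}(ds_{\psi})^{2}$, not a linear one. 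So the substitution step you intend to perform has no target equation. Your alternative ``cleaner route'' also stalls: after swapping the roles of $\varphi$ and $\psi$ the hypotheses (a) and (b) become $\kappa_{\psi}$ constant and $\tau_{\varphi}$ constant, neither of which is assumed, so you cannot invoke the previous theorems.

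The paper's argument proceeds quite differently. From $T_{\varphi}=B_{\psi}/\kappa_{\psi}$ one differentiates to obtain $N_{\varphi}\parallel N_{\psi}$ and the \emph{constant} ratio $ds_{\psi}/ds_{\varphi}=-\kappa_{\varphi}/\tau_{\psi}$; a cross product then yields $T_{\psi}=-B_{\varphi}/\kappa_{\varphi}$. Because $T_{\varphi}\perp T_{\psi}$, the norm of $T\,ds$ shows $ds_{\varphi}/ds$ is itself a constant, so one may write $T=h_{\varphi}T_{\varphi}+h_{\psi}T_{\psi}$ with constant coefficients. Differentiating twice then gives $\kappa$ and $\tau$ as linear combinations of $\kappa_{\varphi},\kappa_{\psi},\tau_{\varphi},\tau_{\psi}$ with constant weights $M_{\varphi},M_{\psi}$. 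The Bertrand relation is finally obtained not from \eqref{3.18} but from the algebraic identity $\tau_{\varphi}/\kappa_{\varphi}=-\kappa_{\psi}/\tau_{\psi}$ (itself a consequence of the frame relations), which makes the specific combination $\dfrac{\kappa}{\tau_{\psi}M_{\psi}}+\dfrac{\tau}{\kappa_{\varphi}M_{\varphi}}$ constant. The mechanism is thus ``constant speed ratios first, then linear combination via an algebraic cancellation,'' rather than ``express speed ratios through $\kappa,\tau$ and plug into a normalization.''
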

\begin{proof}
Since $T_{\varphi }=\dfrac{B_{\psi }}{\kappa _{\psi }}$, it follow that
$\allowbreak $%
\begin{equation}\label{3.27}
N_{\varphi }=-\frac{\tau _{\psi }}{\kappa _{\psi }}\frac{ds_{\psi }}{%
ds_{\varphi }}N_{\psi }.\  \  \  \  \  \  \  \  \  \  \  \  \  \  \  \  \   
\end{equation}
\ Hence $N_{\varphi }$ is parallel to $N_{\psi }$ and since $\frac{N_{\varphi
}}{\kappa _{\varphi }}$ and $\frac{N_{\psi }}{\kappa _{\psi }}$ are unit
vectors, 
\begin{equation}\label{3.28}
\frac{N_{\varphi }}{\kappa _{\varphi }}=\pm \frac{N_{\psi }}{\kappa _{\psi }}
\end{equation}%
From (\ref{3.27}) and (\ref{3.28}), we get
\begin{equation}\label{3.29}
\frac{ds_{\psi }}{ds_{\varphi }}=-\frac{\kappa _{\varphi }}{\tau _{\psi }}\ .
\end{equation}
Moreover since%
\begin{equation*}
B_{\varphi }=T_{\varphi }\times N_{\varphi }=\dfrac{B_{\psi }}{\kappa _{\psi }%
}\times \frac{\kappa _{\varphi }}{\kappa _{\psi }}N_{\psi }=\frac{\kappa
_{\varphi }}{\kappa _{\psi }^{2}}\left( B_{\psi }\times N_{\psi }\right) =%
\frac{\kappa _{\varphi }\kappa _{\psi }^{2}}{\kappa _{\psi }^{2}}\left(
-T_{\psi }\right) =-\kappa _{\varphi }T_{\psi }\text{,}
\end{equation*}%
we get 
\begin{equation*}
T_{\psi }=-\frac{B_{\varphi }}{\kappa _{\varphi }}.
\end{equation*}%
Let \ $R,$ $R_{\varphi },$ $R_{\psi }$ be the coordinate vectors at the
points $P$, $P_{\varphi }$, $P_{\psi }$ on the curves $C$, $C_{\varphi }$, $%
C_{\psi }$ respectively. Then%
\begin{equation}\label{3.31}
R=hR_{\varphi }+(1-h)R_{\psi }.
\end{equation}%
By differentiating (\ref{3.31}) with respect to $s$ and by (\ref{3.29}), we have 
\begin{equation}\label{3.32}
T=\left[ hT_{\varphi }-\frac{\kappa _{\varphi }}{\tau _{\psi }}(1-h)T_{\psi }%
\right] \frac{ds_{\varphi }}{ds}.  
\end{equation}%
Taking the norm of (\ref{3.32}), we obtain 
\begin{equation}\label{3.33}
\frac{ds_{\varphi }}{ds}=\frac{\tau _{\psi }}{\sqrt{h^{2}\tau _{\psi
}^{2}+\kappa _{\varphi }^{2}(1-h)^{2}}}. 
\end{equation}%
Thus with the help of (\ref{3.33}), we can rewrite (\ref{3.32}) as 
\begin{equation*}
T=\left[ \frac{h\tau _{\psi }}{\sqrt{h^{2}\tau _{\psi }^{2}+\kappa
_{\varphi }^{2}(1-h)^{2}}}\right] T_{\varphi }+\left[ \frac{-(1-h)\kappa
_{\varphi }}{\sqrt{h^{2}\tau _{\psi }^{2}+\kappa _{\varphi }^{2}(1-h)^{2}}}%
\right] T_{\psi }
\end{equation*}%
or%
\begin{equation}\label{3.34}
\ T=h_{\varphi }T_{\varphi }+h_{\psi }T_{\psi }\ ,\  \   
\end{equation}%
where
$\allowbreak $ 
\begin{eqnarray*}
h\frac{ds_{\varphi }}{ds} &=&\frac{h\, \, \tau _{\psi }}{\sqrt{h^{2}\tau
_{\psi }^{2}+\kappa _{\varphi }^{2}(1-h)^{2}}}=h_{\varphi },\, \,h_{\varphi }=%
\text{constant} \\
(1-h)\frac{ds_{\psi }}{ds} &=&-\, \frac{\, \left( 1-h\right) \kappa
_{\varphi }}{\sqrt{h^{2}\tau _{\psi }^{2}+\kappa _{\varphi }^{2}(1-h)^{2}}}%
=h_{\psi },\ h_{\psi }=\text{constant}
\end{eqnarray*}%
Differentiating (\ref{3.34}), one can easily get
\begin{equation}\label{3.35}
\frac{N}{\kappa }=h_{\varphi }\frac{ds_{\varphi }}{ds}%
\frac{N_{\varphi }}{\kappa _{\varphi }}+h_{\psi }\frac{ds_{\psi }}{ds}\frac{N_{\psi }}{\kappa _{\psi }}. 
\end{equation}%
Hence 
\begin{equation}\label{3.36}
\frac{N}{\kappa }=\frac{N_{\varphi }}{\kappa _{\varphi }}=\frac{N_{\psi }}{%
\kappa _{\psi }}  
\end{equation}%
and 
\begin{equation}\label{3.37}
\kappa =\kappa _{\varphi }h_{\varphi }\frac{ds_{\varphi }}{ds}+\kappa _{\psi
}h_{\psi }\frac{ds_{\psi }}{ds}. 
\end{equation}%
Using (\ref{3.34}) and (\ref{3.36}), we can find
\begin{equation}\label{3.38}
\frac{B}{\kappa }=h_{\varphi }\frac{B_{\varphi }}{\kappa _{\varphi }}+h_{\psi }%
\frac{B_{\psi }}{\kappa _{\psi }}.
\end{equation}%
Differentiating (\ref{3.38}), we have 
\begin{equation}\label{3.39}
\tau \frac{N}{\kappa }=h_{\varphi }\tau _{\varphi }\frac{ds_{\varphi }}{ds}%
\frac{N_{\varphi }}{\kappa _{\varphi }}+h_{\psi }\tau _{\psi }\frac{%
ds_{\psi }}{ds}\frac{N_{\psi }}{\kappa _{\psi }}. 
\end{equation}%
Hence%
\begin{equation}\label{3.40}
\tau =\tau _{\varphi }h_{\varphi }\frac{ds_{\varphi }}{ds}+\tau _{\psi
}h_{\psi }\frac{ds_{\psi }}{ds}.  
\end{equation}
Using (\ref{3.25}) and (\ref{3.26}), we have 
\begin{equation*}
-\frac{\kappa _{\varphi }}{\tau _{\psi }}=\frac{ds_{\psi }}{ds_{\varphi }}=%
\frac{\tau _{\varphi }}{\kappa _{\psi }}\text{ }
\end{equation*}%
and 
\begin{equation}\label{3.41}
\frac{\tau _{\varphi }}{\kappa _{\varphi }}=-\frac{\kappa _{\psi }}{\tau
_{\psi }}.  
\end{equation}
Assume that 
\begin{equation*}
M_{\varphi }=h_{\varphi }\frac{ds_{\varphi }}{ds},\ M_{\psi }=h_{\psi }\frac{%
ds_{\psi }}{ds},
\end{equation*}
then by (\ref{3.37}),(\ref{3.40}) and (\ref{3.41}), we have 
\begin{eqnarray*}
\frac{\kappa }{\tau _{\psi }M_{\psi }}+\frac{\tau }{\kappa _{\varphi
}M_{\varphi }} &=&\frac{\kappa _{\varphi }M_{\varphi }+\kappa _{\psi }M_{\psi
}}{\tau _{\psi }M_{\psi }}+\frac{\tau _{\varphi }M_{\varphi }+\tau _{\psi
}M_{\psi }}{\kappa _{\varphi }M_{\varphi }}\\
&=&\frac{\kappa _{\varphi }}{\tau
_{\psi }}\frac{M_{\varphi }}{M_{\psi }}+\underset{0}{\underbrace{\left( 
\frac{\kappa _{\psi }}{\tau _{\psi }}+\frac{\tau _{\varphi }}{\kappa
_{\varphi }}\right) }}+\frac{\tau _{\psi }}{\kappa _{\varphi }}\frac{M_{\psi
}}{M_{\varphi }} \\
&=&\frac{\kappa _{\varphi }}{\tau _{\psi }}\frac{M_{\varphi }}{M_{\psi }}+%
\frac{\tau _{\psi }}{\kappa _{\varphi }}\frac{M_{\psi }}{M_{\varphi }}=\text{%
constant.}
\end{eqnarray*}%
Since $\frac{\kappa _{\varphi }}{\tau _{\psi }}$ and $\  \frac{M_{\varphi }}{%
M_{\psi }}=\frac{h}{1-h}\frac{ds_{\psi }}{ds_{\varphi }}=\frac{h}{1-h}\frac{%
\kappa _{\varphi }}{\tau _{\psi }}$ are constant and this is the intrinsic
equation of a Bertrand curve.
\end{proof}
\section{Acknowledgment} The authors would like to thank all the anonymous referees for their valuable comments and suggestions which helped to improve this paper.


\begin{thebibliography}{9}
\bibitem{h4} A. T. Ali, R. Lopez, {\it Slant helices in Minkowski space $E_{1}^{3}$}
, J. Korean Math. Soc., 48, 159-167, 2011.
\bibitem{6} H. Balgetir, M. Bektas, J. Inoguchi, J, \textit{Null Bertrand
Curves in Minkowski 3-space and their Characterizations}, Note di Matematica 
23(1),7-13, 2004.
\bibitem{12}M. Barros, {\it General helices and a theorem of Lancret,} Proc. Amer. Math. Soc. 125
(5), 1503-1509, 1997.
\bibitem{15}J.M. Bertrand, {\it M\'{e}moire sur la th\'{e}orie des courbes \'{a} double
courbure.} J. Math. Pures. Appl. 15, 332-350, 1850.
\bibitem{1} J.F. Burke, \textit{Bertrand Curves Associated with a Pair of
Curves}, Mathematics Magazine, 34(1), 60-62, 1960.
\bibitem{3} M.Do Carmo, {\it Differential Geometry of Curves and Surfaces,}
Prentice-Hall, New Jersey, 1976.
\bibitem{h16} N. Ekmekci, {\it On General Helices and Submanifolds of an
Indefinite-Riemann \ Manifold,} An. \c{S}tiint. Univ. Al. I. Cuza Ia\c{s}i
Mat. (N.S.), 46(2), 263-270, 2000.
\bibitem{h11} K. Ilarslan, {\it Characterizations of Spacelike General Helices in
Lorentzian Manifolds}, Kragujevac J. Math., 25, 209-218, 2003.
\bibitem{14}S. Izumiya and N. Takeuchi, {\it New special curves and developable surfaces}, Turkish
J. Math. 28, 153-163, 2004.

 \bibitem{14b} L. Kula, N. Ekmek\c{c}i, Y. Yayl{\i}, K. Ilarslan, {\it Characterizations of slant helices in Euclidean 3-space}. Turk. J. Math., 34, 261-273, 2010.
\bibitem{14a} L. Kula, Y. Yayl{\i}, {\it On slant helix and its spherical indicatrix}. Applied Mathematics and Computation. 169(1), 600-607, 2005.


\bibitem{5} H. Matsuda, S.H. Yorozu, {\it Notes on Bertrand Curves}, Yokohama
Mathematical Journal, 50, 41-58, 2003.

\bibitem{5b} J. Monterde, {\it Salkowski curves revisted:  A family of curves with constant curvature and non-constant torsion},
Comput.  Aided  Geom.  Design, 26, 271-278,  2009.

\bibitem{9} A.W. Nutbourne, R.R. Martin, {\it Differential Geometry Applied to the Design of
Curves and Surfaces}, ellis Horwood, Chichester, UK, 1988.
\bibitem{7} A.O. Ogrenmis, H. Oztekin, M. Ergut, \textit{Bertrand Curves in
Galilean Space and Their Characterizations}, Kragujevac J. Math. 32, 139-147, 2009.
\bibitem{h3} A.O. Ogrenmis, M. Ergut, M. Bektas, {\it On The Helices in The Galilean
Space $G_{3}$}, Iranian J. of Sci. \& Tech., Transaction A, Vol. 31(A2), 177-181, 2007.
\bibitem{8} H.B. Oztekin, M. Bektas, \textit{Representation formulae for Bertrand curves in the Minkowski 3-space}, Scientia Magna, 6(1), 89-96, 2010.
\bibitem{8j}E. Salkowski, {\it Zur transformation von raumkurven}, Mathematische Annalen, 66(4), 517-557, 1909.
\bibitem{hh7} T. Sasai. {\it The Fundamental Theorem of Analytic Space Curves and Apparent Singularities of Fuchsian Differential Equation\textit{s}}. Tohoku Math Journ., 36, 17-24, 1984.
\bibitem{10} W.K. Schief, {\it On the integrability of Bertrand curves and Razzaboni surfaces}, J. Geom.
Phys., 45, 130-150, 2003.
\bibitem{13}D.J. Struik, {\it Lectures on Classical Differential Geometry}, 2nd edn. (Addison Wesley,
Dover), 1988.
\bibitem{11}B. van-Brunt and K. Grant, {\it Potential application of Weingarten surfaces in CADG,
Part I: Weingarten surfaces and surface shape investigation,} Comput. Aided Geom. Design,
13, 569-582, 1996.
\bibitem{h8} D.W. Yoon, {\it General Helices of AW(k)-Type in the Lie
Group}, Journal of Applied Mathematics, Article ID 535123, 10
pages,doi:10.1155/2012/535123,Volume 2012.
\end{thebibliography}
\end{document}